\newcommand{\C}{{\mathbb C}}
\newcommand{\cone}{\operatorname{cone}}
\newcommand{\const}{\operatorname{const.}}
\newcommand{\dbar}{\overline{\partial}}
\newcommand{\Dom}{\operatorname{Dom}}
\newcommand{\HH}{\operatorname{H}}
\newcommand{\Image}{\operatorname{Im}}
\newcommand{\Index}{\operatorname{Index}}
\newcommand{\KK}{\operatorname{K}}
\newcommand{\KKK}{\operatorname{KK}}
\newcommand{\Ker}{\operatorname{Ker}}
\newcommand{\pt}{\operatorname{pt}}
\newcommand{\Q}{{\mathbb Q}}
\newcommand{\Td}{\operatorname{Td}}
\newcommand{\Z}{{\mathbb Z}}
\numberwithin{equation}{section}
\theoremstyle{plain}
\newtheorem{lemma}[equation]{Lemma}
\newtheorem{theorem}[equation]{Theorem}
\newtheorem{proposition}[equation]{Proposition}
\theoremstyle{remark}
\newtheorem{remark}[equation]{Remark}
\begin{document}

\title{A Dolbeault-Hilbert complex for a variety with
isolated singular points}

\author{John Lott}
\address{Department of Mathematics\\
University of California, Berkeley\\
Berkeley, CA  94720-3840\\
USA} \email{lott@berkeley.edu}

\thanks{Research partially supported by NSF grant
DMS-1810700}
\date{July 11, 2019}
\keywords{Dolbeault,singular,variety,Riemann-Roch}
\subjclass[2010]{19K33,19L10,32W05,58J10}

\begin{abstract}
  Given a compact Hermitian complex space with isolated singular points,
  we construct a Dolbeault-type Hilbert complex whose cohomology is isomorphic to the cohomology of the structure sheaf. We show that the
  corresponding
  K-homology class coincides with the one constructed by
  Baum-Fulton-MacPherson.
\end{abstract}

\maketitle
% \tableofcontents

\section{Introduction} \label{section1}

The program
of doing index theory, or more generally elliptic theory, on
singular varieties goes back at least to Singer's paper
\cite[\S 4]{Singer (1971)}. This program
takes various directions, for example the
relation between $L^2$-cohomology and intersection
homology. In this paper we consider a somewhat different direction, which is
related to the arithmetic genus. This is
motivated by work of Baum-Fulton-MacPherson
\cite{Baum-Fulton-MacPherson (1975),Baum-Fulton-MacPherson (1979)}. 

Let $X$ be a projective complex algebraic variety and let ${\mathcal S}$ be a
coherent sheaf on $X$. 
In \cite{Baum-Fulton-MacPherson (1979)}, the authors associated to
${\mathcal S}$ an element
$[{\mathcal S}]_{BFM} \in \KK_0(X)$ of the topological K-homology of X.
This class enters into their Riemann-Roch theorem for singular varieties.
In
particular, under the map $p \: : \: X \rightarrow \pt$, the image
$p_* [{\mathcal S}]_{BFM} \in \KK_0(\pt) \cong \Z$ is expressed in terms of 
sheaf cohomology by
$\sum_i (-1)^i \dim(\HH^i(X; {\mathcal S}))$.

In view of the isomorphism between topological K-homology and analytic
K-homology
\cite{Baum-Douglas (1982),Baum-Higson-Schick (2007)}, the class 
$[{\mathcal S}]_{BFM}$ can be represented by an ``abstract elliptic operator''
in the sense of Atiyah \cite{Atiyah (1970)}.
This raised
the question of how to find an explicit
cycle in analytic K-homology, even if $X$ is singular,
that represents $[{\mathcal S}]_{BFM}$.
The most basic case is when ${\mathcal S}$ is the structure sheaf 
${\mathcal O}_X$.
If $X$ is smooth then the
operator representing
$[{\mathcal O}_X]_{BFM}$
is $\dbar + \dbar^*$. Hence we are looking for the right analog of this
operator when $X$ may be singular.

A second related question is to
find a Hilbert complex, in the sense of \cite{Bruning-Lesch (1992)},
whose cohomology is isomorphic to
$\HH^\star(X; {\mathcal O}_X)$. We want
the complex to be intrinsic to $X$. Also, if $X$ is smooth then we want to
recover the $\dbar$-complex on $(0, \star)$-forms.

In this paper, we answer these
questions when $X$ has isolated singular points.
To see the nature of the problem, suppose that $X$ is a complex
curve, whose normalization has genus $g$.
In this case, the Riemann-Roch theorem says
\begin{equation} \label{1.1}
  \dim(\HH^0(X; {\mathcal O}_X)) - \dim(\HH^1(X; {\mathcal O}_X)) =
  1 - g - \sum_{x \in X_{sing}} \delta_x,
\end{equation}
where $\delta_x$ is a certain positive integer attached to the
singular point $x$
\cite[p. 298]{Hartshorne (1997)}.
To find the appropriate Hilbert complex, it is
natural to start with the Dolbeault complex
$\Omega^{0,0}_c(X_{reg}) \stackrel{\dbar}{\longrightarrow}
\Omega^{0,1}_c(X_{reg})$ of smooth compactly supported forms on
$X_{reg}$ and look for a  closed operator extension, where
$X_{reg}$ is endowed with the induced Riemannian metric from its
projective embedding. For the minimal closure
$\dbar_s$, one finds $\Index(\dbar_s) = 1-g$.  Taking a different
closure can only make the index go up
\cite{Bruning-Peyerimhoff-Schroder (1990)},
whereas in view of (\ref{1.1}) we want the index to go down.
(Considering complete Riemannian metrics on $X_{reg}$ does not help.)
However,
on the level of indices, we can get the right answer by enhancing the
codomain by $\bigoplus_{x \in X_{sing}} \C^{\delta_x}$.

Now let $X$ be a compact Hermitian complex space of pure dimension $n$.
For technical reasons, we assume that the singular set $X_{sing}$
consists of isolated singularities.  (In the bulk of the paper we allow
coupling to a holomorphic vector bundle, but in this introduction we only
discuss the case when the vector bundle is trivial.)  Let $\dbar_s$ be the
minimal closed extension of the $\dbar$-operator on
$X_{reg} = X - X_{sing}$. Its domain
${\Dom}(\dbar_s^{0,\star})$ can be localized to a complex of sheaves
$\underline{\Dom}(\dbar_s^{0,\star})$. Let
$\underline{\HH}^{0,\star}(\dbar_s)$ denote the cohomology, a
sum of skyscraper sheaves on $X$ if $\star > 0$.
We write ${\mathcal O}_s$ for
$\underline{\HH}^{0,0}(\dbar_s)$, which is the sheaf of
germs of weakly holomorphic
functions on $X$, the latter being in the sense of
\cite[Section 4.3]{Whitney (1972)}.
Then ${\mathcal O}_s/{\mathcal O}_X$ is also a sum of
skyscraper sheaves on $X$.
Its vector space of global sections will be written as
$({\mathcal O}_s/{\mathcal O}_X)(X)$.
Both
$\underline{\HH}^{0,\star}(\dbar_s)$ and
${\mathcal O}_s/{\mathcal O}_X$ can be computed using a resolution
of $X$ \cite[Corollary 1.2]{Ruppenthal (2018)}.

Define vector spaces $T^*$ by
\begin{align} \label{1.2}
    T^0 = & {\Dom}(\dbar_s^{0,0}), \\
  T^1 = & {\Dom}(\dbar_s^{0,1}) \oplus
  ({\mathcal O}_s/{\mathcal O}_X)(X), \notag \\
    T^\star = & {\Dom}(\dbar_s^{0,\star}) \oplus
    (\underline{\HH}^{0,\star-1}(\dbar_s))(X), \text{ if } 2 \le \star \le n.
    \notag  
  \end{align}

  To define a differential on $T^*$, let $\triangle_s^{0,\star}$ be the Laplacian
  associated to $\dbar_s$. Let $P_{\Ker(\triangle_s^{0,\star})}$ be
  orthogonal projection onto the kernel of $\triangle_s^{0,\star}$.
  As the elements of $\Ker(\triangle_s^{0,\star})$ are
  $\dbar_s$-closed, for each $x \in X_{sing}$ there is a well-defined
  map $\Ker(\triangle_s^{0,\star}) \rightarrow
  (\underline{\HH}^{0,\star}(\dbar_s))_x$ to the stalk of
  $\underline{\HH}^{0,\star}(\dbar_s)$ at $x$.
  For $\star > 0$, putting these together
  for all $x \in X_{sing}$, and precomposing with
  $P_{\Ker(\triangle_s^{0,\star})}$, gives a linear map
  $\gamma : {\Dom}(\dbar_s^{0,\star}) \rightarrow
  (\underline{\HH}^{0,\star}(\dbar_s))(X)$.
  For $\star = 0$, we similarly define
  $\gamma : {\Dom}(\dbar_s^{0,0}) \rightarrow
  ({\mathcal O}_s/{\mathcal O}_X)(X)$.
Define a differential $d : T^* \rightarrow T^{*+1}$ by
\begin{align} \label{1.3}  
  d(\omega) = & (\dbar_s \omega, \gamma(\omega)), \text{ if } \star = 0, \\
  d(\omega, a) = & ( \dbar_s \omega, \gamma(\omega)),
  \text{ if } \star > 0. \notag 
\end{align}  

\begin{theorem} \label{1.4}
  The cohomology of $(T, d)$ is isomorphic to $\HH^*(X; {\mathcal O}_X)$.
  \end{theorem}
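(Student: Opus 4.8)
The plan is to recognize $(T,d)$ as, up to a shift, a mapping cone, to run the same cone construction a second time starting from ${\mathcal O}_X$, and then to match the two. Write ${\mathcal C}^\star := \underline{\Dom}(\dbar_s^{0,\star})$. This is a complex of fine sheaves on $X$ -- fineness because multiplication by a smooth function on $X$ preserves the minimal domain -- with $\Gamma(X;{\mathcal C}^\star) = \Dom(\dbar_s^{0,\star})$, so the cohomology of the $L^2$ Dolbeault complex $(\Dom(\dbar_s^{0,\bullet}),\dbar_s)$ is the hypercohomology $\HH^*(X;{\mathcal C}^\bullet)$. Let $B^\bullet$ be the complex with zero differential given by $B^0 = ({\mathcal O}_s/{\mathcal O}_X)(X)$ and $B^q = (\underline{\HH}^{0,q}(\dbar_s))(X)$ for $q\ge 1$. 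The maps $\gamma$ of the introduction assemble into a degree-zero map $\gamma\colon (\Dom(\dbar_s^{0,\bullet}),\dbar_s) \to (B^\bullet,0)$, and this is a chain map since $\gamma\circ\dbar_s = 0$: a form in $\im(\dbar_s)$ is $L^2$-orthogonal to $\Ker(\triangle_s)$, hence killed by the projection $P_{\Ker(\triangle_s)}$ built into $\gamma$. Reading off (\ref{1.2}) and (\ref{1.3}), $(T,d)$ is, up to the sign of $\gamma$, the mapping cone of $\gamma$ shifted one step down, so there is a long exact sequence
\[
\cdots \to H^{\star-1}(\Dom(\dbar_s^{0,\bullet})) \xrightarrow{\gamma_*} B^{\star-1} \to H^\star(T) \to H^\star(\Dom(\dbar_s^{0,\bullet})) \xrightarrow{\gamma_*} B^\star \to \cdots,
\]
whence a short exact sequence $0 \to \Coker(\gamma_*\colon H^{\star-1}\to B^{\star-1}) \to H^\star(T) \to \Ker(\gamma_*\colon H^\star\to B^\star) \to 0$.

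Since ${\mathcal O}_X \subseteq {\mathcal O}_s = \underline{\HH}^{0,0}(\dbar_s) \subseteq {\mathcal C}^0$, I can form ${\mathcal Q}^\bullet := \bigl[\, {\mathcal O}_X \hookrightarrow {\mathcal C}^0 \xrightarrow{\dbar_s} {\mathcal C}^1 \xrightarrow{\dbar_s} \cdots \,\bigr]$, the mapping cone of the inclusion ${\mathcal O}_X[0] \hookrightarrow {\mathcal C}^\bullet$. Over $X_{reg}$ one has ${\mathcal O}_X = {\mathcal O}_s$ and, by the local $L^2$ $\dbar$-Poincar\'e lemma, ${\mathcal O}_X[0] \hookrightarrow {\mathcal C}^\bullet$ is a quasi-isomorphism, so ${\mathcal Q}^\bullet$ is acyclic there; at $x \in X_{sing}$ the cohomology of the stalk complex ${\mathcal Q}^\bullet_x$ is $({\mathcal O}_s/{\mathcal O}_X)_x$ in degree $0$ and $\underline{\HH}^{0,q}(\dbar_s)_x$ in degree $q\ge 1$. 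Hence the cohomology sheaves ${\mathcal H}^q({\mathcal Q}^\bullet)$ are skyscrapers on the finite set $X_{sing}$, the hypercohomology spectral sequence of ${\mathcal Q}^\bullet$ degenerates, and $\HH^q(X;{\mathcal Q}^\bullet) \cong {\mathcal H}^q({\mathcal Q}^\bullet)(X) = B^q$. The distinguished triangle ${\mathcal O}_X[0] \to {\mathcal C}^\bullet \to {\mathcal Q}^\bullet \xrightarrow{+1}$ then gives a long exact sequence
\[
\cdots \to \HH^\star(X;{\mathcal O}_X) \to H^\star(\Dom(\dbar_s^{0,\bullet})) \xrightarrow{\beta_*} B^\star \to \HH^{\star+1}(X;{\mathcal O}_X) \to \cdots,
\]
and hence $\HH^\star(X;{\mathcal O}_X)$ sits in a short exact sequence $0 \to \Coker(\beta_*\colon H^{\star-1}\to B^{\star-1}) \to \HH^\star(X;{\mathcal O}_X) \to \Ker(\beta_*\colon H^\star\to B^\star) \to 0$.

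The crux is that the two connecting maps agree: $\beta_* = \gamma_*$. For a $\dbar_s$-closed form $\omega$, $\beta_*[\omega]$ is, by construction of the triangle, the tuple of germ classes of $\omega$ in the stalk cohomologies ${\mathcal H}^\star({\mathcal Q}^\bullet)_x$, which are $\underline{\HH}^{0,\star}(\dbar_s)_x$ for $\star\ge 1$ and ${\mathcal O}_{s,x}/{\mathcal O}_{X,x}$ for $\star = 0$. For $\gamma_*$ one first replaces $\omega$ by $P_{\Ker(\triangle_s)}\omega$, but $\omega - P_{\Ker(\triangle_s)}\omega$ lies in the closure of $\im(\dbar_s)$, which equals $\im(\dbar_s)$ because $\dbar_s$ has closed range on a compact Hermitian complex space with isolated singularities; thus $\omega - P_{\Ker(\triangle_s)}\omega = \dbar_s\alpha$ with $\alpha \in \Dom(\dbar_s^{0,\star-1})$, whose germ class at every $x$ vanishes. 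Granting $\beta_* = \gamma_*$, the vector spaces $H^\star(T)$ and $\HH^\star(X;{\mathcal O}_X)$ are extensions of one and the same $\Ker(\gamma_*\colon H^\star\to B^\star)$ by one and the same $\Coker(\gamma_*\colon H^{\star-1}\to B^{\star-1})$; since $\HH^\star(X;{\mathcal O}_X)$ is finite dimensional, so is $H^\star(T)$, and the two are isomorphic in each degree.

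I expect the technical weight, and the main obstacle, to lie in two places. First, the sheaf-theoretic foundations: that $\Dom(\dbar_s^{0,\bullet})$ indeed computes the hypercohomology of the fine complex ${\mathcal C}^\bullet$, that ${\mathcal C}^\bullet$ restricts over $X_{reg}$ to a resolution of ${\mathcal O}_{X_{reg}}$ (a local $L^2$ $\dbar$-solvability statement), and that the stalk cohomology of ${\mathcal Q}^\bullet$ at a singular point is the asserted skyscraper group -- the last point resting on the resolution-theoretic description of $\underline{\HH}^{0,\star}(\dbar_s)$ and of ${\mathcal O}_s/{\mathcal O}_X$. Second, and most essentially, the identity $\beta_* = \gamma_*$: this is the step that reconciles the analytically defined $\gamma$, built from the nonlocal Hodge projection $P_{\Ker(\triangle_s)}$, with the purely local passage to germ classes, and it is where one must invoke the closed-range property of $\dbar_s$. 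Refining the scheme so as to produce an explicit quasi-isomorphism of complexes would promote the conclusion to a canonical isomorphism.
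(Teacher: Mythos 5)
Your argument is correct, but it is organized quite differently from the paper's. The paper builds, at the sheaf level, an explicit fine resolution $\underline{\widetilde{\mathcal C}}$ of ${\mathcal O}_X$ (a mapping cone involving the cohomology sheaves, using injectivity of skyscraper sheaves to extend the quotient maps to morphisms $\alpha,\beta$ defined on all of $\underline{\Dom}(\dbar_s^{0,\star})$); its global sections give a complex $(\widetilde T,\widetilde d)$ computing $\HH^*(X;{\mathcal O}_X)$ by Proposition \ref{3.6}, and then Proposition \ref{4.9} exhibits an explicit cochain isomorphism $m$ between $(\widetilde T,\widetilde d)$ and the Hodge-projected complex $(T,d)$, the map $m$ being built from ${\mathcal I}^{-1}=(\dbar_s|_{\Image(\dbar_s^*)})^{-1}$. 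You instead avoid constructing $\alpha$ and $\beta$ altogether: you take the cone of ${\mathcal O}_X\hookrightarrow \underline{\Dom}(\dbar_s^{0,\bullet})$, observe its cohomology sheaves are skyscrapers with global sections $B^\bullet$, and compare the two resulting long exact sequences, the only nontrivial point being $\beta_*=\gamma_*$ on cohomology --- which is exactly the same Hodge-theoretic fact (a closed form differs from its harmonic projection by an element of the closed range $\Image(\dbar_s)$, hence has the same germ class at each singular point) that powers the paper's map $m$. The trade-off: your route is shorter and free of auxiliary choices, but because you only match the outer terms of two short exact sequences you obtain a non-canonical isomorphism of vector spaces, whereas the paper's explicit cochain equivalence is what gets reused in Section \ref{section5}, where the complex is deformed while tracking its K-homology class; if you want your proof to feed into Theorem \ref{1.5} you would need to upgrade it to an actual map of long exact sequences (or of complexes), as you note at the end. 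Two small points to tidy up: the fineness of $\underline{\Dom}(\dbar_s^{0,\star})$ should be checked with partitions of unity that are locally restrictions of ambient smooth functions (so that $[\dbar_s,f]$ is bounded and the minimal domain is preserved), and the identification of the edge map $\HH^\star(X;{\mathcal Q}^\bullet)\cong B^\star$ with ``passage to germ classes'' deserves a sentence, since ${\mathcal Q}^\bullet$ is not a complex of acyclic sheaves in degree $-1$ and one is really using the degenerate hypercohomology spectral sequence.
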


Theorem \ref{1.4} can be seen as an extension of
\cite[Corollary 1.3]{Ruppenthal (2018)} by Ruppenthal,
which implies the result when
$X$ is normal and has rational singularities. To prove Theorem \ref{1.4}, we
construct a certain
resolution of ${\mathcal O}_X$ by fine sheaves.  The cohomology
of the complex $(\widetilde{T}, \widetilde{d})$ of global sections is then
isomorphic to $\HH^*(X; {\mathcal O}_X)$. The complex
$(\widetilde{T}, \widetilde{d})$ is not quite the same as
$(T, d)$ but we show that they are cochain-equivalent, from which the
theorem follows.

The spectral triple $(C(X), T, d+d^*)$ defines an element
$[\mathcal O_X]_{an} \in \KK_0(X)$ of the analytic K-homology of $X$.

\begin{theorem} \label{1.5}
  If $X$ is a projective algebraic variety with isolated singularities then
  $[\mathcal O_X]_{an} = [\mathcal O_X]_{BFM}$ in
  $\KK_0(X)$.
  \end{theorem}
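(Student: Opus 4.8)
The plan is to compare the two classes after restricting them to the regular part $X_{reg}$, and then to show that the resulting difference, which lives on $X_{sing}$, vanishes.

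Consider the restriction of $[\mathcal{O}_X]_{an}$ to the open set $X_{reg}$. The maps $\gamma$ of \mbox{(\ref{1.3})} take values in spaces of global sections of skyscraper sheaves, hence are of finite rank; they and their adjoints are a compact perturbation of $d+d^*$, and $C_0(X_{reg})$ annihilates the finite-dimensional summand of $T$ on which they act. So $[\mathcal{O}_X]_{an}$ restricts on $X_{reg}$ to the K-homology class of the minimal $L^2$-Dolbeault complex of $X_{reg}$ with the Hermitian metric induced from the projective embedding. On the other hand $[\mathcal{O}_X]_{BFM}$ restricts to $[\mathcal{O}_{X_{reg}}]_{BFM}$, which --- $X_{reg}$ being a smooth complex manifold --- is the ordinary Dolbeault (equivalently $\mathrm{Spin}^{c}$-Dirac) K-homology class there. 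The crux of the proof, and the step I expect to be the main obstacle, is that these two restrictions coincide in $\KK_0(X_{reg})$; equivalently, that the \emph{minimal} closure of $\dbar$ for the incomplete induced metric represents the topological Dolbeault class in a punctured neighbourhood of each singular point.

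Granting this, $\delta := [\mathcal{O}_X]_{an} - [\mathcal{O}_X]_{BFM}$ lies in the kernel of the restriction $\KK_0(X) \to \KK_0(X_{reg})$, hence, by the K-homology exact sequence of the pair $(X, X_{sing})$, is the image of a class in $\KK_0(X_{sing}) \cong \bigoplus_{x \in X_{sing}} \Z$; write $\delta = \sum_x n_x\,[x]$. Applying the pushforward to a point: $\Index(d + d^*) = \sum_i (-1)^i \dim \HH^i(X; \mathcal{O}_X)$ by Theorem \ref{1.4}, and the corresponding integer attached to $[\mathcal{O}_X]_{BFM}$ is the same by the Baum--Fulton--MacPherson Riemann--Roch theorem, so $\sum_x n_x = 0$. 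Since $X$ is connected (otherwise one treats its components separately), the point inclusions $\{x\} \hookrightarrow X$ are mutually homotopic, so all the classes $[x] \in \KK_0(X)$ agree and $\delta = \big(\sum_x n_x\big)[x] = 0$.

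It remains to establish the coincidence of restrictions, which I would do with a resolution of singularities $\pi \colon \widetilde{X} \to X$ that is biholomorphic over $X_{reg}$. Since $\widetilde{X}$ is smooth projective, $[\mathcal{O}_{\widetilde{X}}]_{an} = [\mathcal{O}_{\widetilde{X}}]_{BFM}$ is its ordinary Dolbeault class \cite{Baum-Douglas (1982)}, and $\pi_*$ of it restricts on $X_{reg}$ to the Dolbeault class there; so it suffices to show $[(\Dom(\dbar_s), \dbar_s + \dbar_s^*)] = \pi_*[\mathcal{O}_{\widetilde{X}}]_{an}$ in $\KK_0(X)$. Pulling the induced metric back by $\pi$ presents the left-hand side as $\pi_*$ of the class of the minimal $L^2$-Dolbeault complex for the degenerate pulled-back metric $\pi^{*}g$ on $\widetilde{X}$, so the question becomes whether, on the \emph{compact} manifold $\widetilde{X}$, that degenerate-metric complex is K-homologous to the one for a smooth Hermitian metric. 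Here I see two routes: deform $\pi^{*}g$ to a smooth metric through $(1-t)g_0 + t\,\pi^{*}g$ and show the associated family of Fredholm complexes stays norm-continuous up to $t = 1$, exploiting that $\pi^{-1}(X_{sing})$ has complex codimension at least one with conical transverse geometry; or, in the spirit of the proof of Theorem \ref{1.4}, realize both complexes by fine sheaves on $X$ and upgrade Ruppenthal's identification of the $\dbar_s$-complex with $R\pi_*\mathcal{O}_{\widetilde{X}}$ \cite[Corollary 1.2]{Ruppenthal (2018)} to a cochain equivalence with bounded homotopies. (The bookkeeping that the two descriptions of $\delta$ then agree --- on the BFM side through covariant functoriality of the transformation \cite{Baum-Fulton-MacPherson (1979)} together with the sequence $0 \to \mathcal{O}_X \to \mathcal{O}_s \to \mathcal{O}_s/\mathcal{O}_X \to 0$, on the analytic side through the finite-rank perturbation and a count of the $\Z/2$-grading on the finite-dimensional summand of $T$ --- matches automatically because $R^i\pi_*\mathcal{O}_{\widetilde{X}} \cong \underline{\HH}^{0,i}(\dbar_s)$.) Whichever route one takes, the genuinely hard point is the local behaviour of the minimal $\dbar$-closure in a deleted conical neighbourhood of an isolated singularity, where one relies on the $L^2$-$\dbar$-theory on singular complex spaces (Pardon--Stern, Ruppenthal, and subsequent work).
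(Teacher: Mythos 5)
Your overall architecture --- localize the difference to $X_{sing}$ via the K-homology exact sequence of the pair $(X, X_{sing})$, observe that the finite-rank terms $\gamma$ disappear under restriction away from the singular set, and then kill the remaining class supported on points by comparing pushforwards to a point (the arithmetic genus on both sides, via Theorem \ref{1.4} and Baum--Fulton--MacPherson Riemann--Roch) --- is exactly the paper's. The BFM side is handled by you via compatibility of the Baum--Fulton--MacPherson transformation with open immersions, and by the paper via the identity (\ref{BFM}), $[{\mathcal O}_X]_{BFM} - \pi_* [{\mathcal O}_M]_{BFM} = \sum_j n_j [{\mathcal O}_{V_j}]_{BFM}$, with the $V_j$ equal to the singular points; these are interchangeable for the present purpose.

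The genuine gap is the step you yourself flag as the crux: showing that the class of the minimal $L^2$-$\dbar$ complex for the incomplete induced metric agrees, after restriction away from $X_{sing}$, with (the pushforward of) the smooth Dolbeault class of a resolution. Neither of your two proposed routes is carried out, and both face real difficulties: deforming through $(1-t)g_0 + t\,\pi^* g$ runs into exactly the degeneration you are trying to control (the domain of the minimal closure, hence the unbounded cycle, need not vary continuously as the metric degenerates at $t=1$), and upgrading Ruppenthal's quasi-isomorphism to an equivalence of K-homology cycles is a substantial project in itself. The paper's resolution (Lemma \ref{newlem} inside Proposition \ref{sclass}, following Haskell) is to notice that one never needs to compare the two operators on all of $X_{reg}$: since $\pi^{-1}(X_{sing})$ is a simple normal crossing divisor it has a closed regular neighborhood $C'$ homotopy equivalent to it, with $C = \pi(C')$ homotopy equivalent to $X_{sing}$, so that $\KK_0(X, X_{sing}) \cong \KK_0(X, C) \cong \KKK(C_0(X-C); \C)$. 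Because the class $[\dbar_M + \dbar_M^*]$ on the smooth compact $M$ is independent of the choice of Hermitian metric, one may choose the metric on $M$ so that $\pi$ restricts to an isometry from $M - C'$ to $X - C$; the two cycles then restrict to literally the same operator over $X - C$, and no analysis of the minimal closure near the punctured neighborhoods of the singular points is required. With that lemma supplied, the rest of your argument goes through essentially as written.
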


There has been interesting earlier work on the questions addressed in this
paper.  In \cite{Ancona-Gaveau (1994)}, Ancona and Gaveau
gave a resolution of the structure sheaf of a normal complex space $X$,
assuming that the singular locus is smooth, in terms of differential forms on
a resolution of $X$.  The construction depended on the
choice of resolution.
In \cite{Fox-Haskell (2000)}, Fox and Haskell discussed using a perturbed
Dolbeault operator on an ambient manifold to represent the K-homology class
of the structure sheaf.  In \cite{Andersson-Samuelsson (2012)},
Andersson and Samuelsson gave a resolution of the structure sheaf by certain
currents on $X$, that are smooth on $X_{reg}$. After this paper was
written, Bei and Piazza posted \cite{Bei-Piazza}, which also has a proof of
Proposition \ref{sclass}.

The structure of the paper is the following.  In Section
\ref{section2}, given a holomorphic vector bundle $V$ on $X$, we recall the
definition of the minimal closure $\dbar_{V,s}$ and show that
$\dbar_{V,s} + \dbar_{V,s}^*$ gives an element of the analytic K-homology group
$\KK_0(X)$, in the unbounded formalism for the Kasparov KK-group
$\KKK(C(X); \C)$. In Section \ref{section3} we construct
a resolution of the sheaf $\underline{V}$ by fine sheaves.
Their global sections give
a Hilbert complex.
In Section \ref{section4} we deform this to the complex $(T_V, d_V)$.
Section \ref{section5}
has the proof of Theorem \ref{1.5}.
More detailed descriptions appear at the beginning of the sections.

I thank Paul Baum and Peter Haskell for discussions.
I especially thank Peter for pointing out the relevance of
\cite{Haskell (1987)}. I also thank the referee for helpful comments.

\section{Minimal closure and compact resolvent} \label{section2}

In this section we consider a holomorphic vector bundle $V$
on a compact complex
space $X$ with isolated singularities.  We define the minimal closure
$\dbar_{V,s}$. We show that
the spectral triple
$(C(X), \overline{\partial}_{V,s} + \overline{\partial}_{V,s}^*,
\Omega_{L^2}^{0,*}(X_{reg}; V))$ gives a well-defined element
of the analytic $K$-homology group $\KK_0(X)$, in the unbounded formalism.
The main issue is to show that
$\overline{\partial}_{V,s} + \overline{\partial}_{V,s}^*$ has a
compact resolvent. When $V$ is trivial, this was shown in
\cite{Ovrelid-Ruppenthal (2014)}.

Let $X$ be a reduced compact complex space of pure dimension $n$.
 For each $x \in X$, there is
a  neighborhood $U$ of $x$
  with an embedding of $U$ into some domain $U^\prime \subset
    \C^N$, as the zero set of a finite number of holomorphic functions
    on $U^\prime$.

Let ${\mathcal O}_X$ be the analytic structure sheaf of $X$.
Let $X_{sing}$ be the set of
singular points of $X$ and
put $X_{reg} = X - X_{sing}$.

We equip $X$ with a 
Hermitian metric $g$ on $X_{reg}$ that satisfies
the following property: For each $x \in X$, there are $U$ and $U^\prime$
as above, along with
a smooth Hermitian metric $G$ on $U^\prime$, so that
    $g \Big|_{X_{reg} \cap U} = G \Big|_{X_{reg}\cap U}$.

Let $V$ be a finite dimensional
holomorphic vector bundle on $X$ or, equivalently, a locally free
sheaf $\underline{V}$
of ${\mathcal O}_X$-modules.
For each $x \in X$, there are $U$ and $U^\prime$
as above so that $V \Big|_U$ is the restriction of a trivial holomorphic bundle
$U^\prime \times \C^N$ on $U^\prime$.
Let $h$ be a Hermitian inner product on $V \Big|_{X_{reg}}$ that satisfies the
following property: For each $x \in X$, there are such $U$ and $U^\prime$
so that
$h \Big|_{X_{reg} \cap U}$ is the restriction of a
smooth Hermitian metric on $U^\prime \times \C^N$.

Let $\overline{\partial}_{V,s}$ be the minimal closed extension of the
$\overline{\partial}_V$-operator on $X_{reg}$. That is, the domain of
$\overline{\partial}_{V,s}$ is the set of
$\omega \in \Omega_{L^2}^{0,*}(X_{reg}; V)$
so that there are a sequence of compactly supported smooth forms
$\omega_i \in \Omega^{0,*}(X_{reg}; V)$ on $X_{reg}$
and some $\eta \in
\Omega_{L^2}^{0,*+1}(X_{reg}; V)$ such
that $\lim_{i \rightarrow \infty} \omega_i = \omega$ in
$\Omega_{L^2}^{0,*}(X_{reg}; V)$, and
$\lim_{i \rightarrow \infty} \dbar_{V,s} \omega_i = \eta$ in
$\Omega_{L^2}^{0,*+1}(X_{reg}; V)$. We then put $\dbar_{V,s} \omega = \eta$,
which is uniquely defined.

Hereafter we assume that $X_{sing}$ is finite.

\begin{proposition} \label{2.1}
The spectral triple
$(C(X), \overline{\partial}_{V,s} + \overline{\partial}_{V,s}^*,
\Omega_{L^2}^{0,*}(X_{reg}; V))$ gives a well-defined element
of the analytic $K$-homology group $\KK_0(X)$.
\end{proposition}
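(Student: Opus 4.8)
The plan is to verify the three conditions that define an unbounded Fredholm module (spectral triple) representing a class in $\KK_0(X) = \KKK(C(X);\C)$: (i) $D := \overline{\partial}_{V,s} + \overline{\partial}_{V,s}^*$ is a self-adjoint operator on the $\Z/2$-graded Hilbert space $\mathcal H := \Omega_{L^2}^{0,\mathrm{even}}(X_{reg};V) \oplus \Omega_{L^2}^{0,\mathrm{odd}}(X_{reg};V)$, odd with respect to the grading; (ii) $D$ has compact resolvent; and (iii) the set of $f \in C(X)$ for which the commutator $[D,f]$ extends to a bounded operator on $\mathcal H$ is dense in $C(X)$. Self-adjointness of $D$ is standard once one knows the adjoint of the minimal closure is the maximal closure of the formal adjoint, so that $D$ restricted to $\Dom(\overline{\partial}_{V,s}) \cap \Dom(\overline{\partial}_{V,s}^*)$ is symmetric with self-adjoint closure; I would cite the Hilbert-complex framework of Br\"uning--Lesch \cite{Bruning-Lesch (1992)} for this, observing that the even/odd splitting makes $D$ odd.

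For condition (iii), I would take the dense subalgebra to be the Lipschitz functions on $X$ (with respect to the metric induced by $g$), or more conservatively the restrictions to $X$ of smooth functions on a local embedding space $U'$. For such an $f$, multiplication preserves $\Omega_c^{0,*}(X_{reg};V)$, and on that core one computes $[\overline{\partial}_V, f]\omega = (\overline{\partial}_V f) \wedge \omega$, which is bounded by $\|df\|_{L^\infty}$ since $g$ is locally the restriction of a smooth ambient metric; the adjoint piece is handled by taking the formal adjoint of this identity. A short argument then shows $[D,f]$, defined on the core, is bounded and hence extends to $\mathcal H$; density of Lipschitz functions in $C(X)$ is Stone--Weierstrass. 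One also checks $f \cdot \Dom(D) \subseteq \Dom(D)$, which follows from boundedness of $[D,f]$ together with $f\cdot\Dom(D) \subseteq \mathcal H$ and closedness of $D$.

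The main obstacle is condition (ii), compactness of the resolvent of $D$ — equivalently, the statement that the inclusion $\Dom(D) \hookrightarrow \mathcal H$ (with graph norm) is compact. When $V$ is trivial this is exactly the theorem of \O vrelid--Ruppenthal \cite{Ovrelid-Ruppenthal (2014)}, so the task is to reduce the general case to theirs. I would argue locally near each $x \in X_{sing}$: choose a neighborhood $U$ on which $V$ is holomorphically trivial and $h$, $g$ are restrictions of smooth ambient data. On such a $U$ the $\overline{\partial}_V$-complex with its $L^2$ structure is isomorphic, up to a bounded-with-bounded-inverse zeroth-order multiplication operator (coming from the matrix of $h$ in the trivialization), to the direct sum of $\rk(V)$ copies of the scalar $\overline{\partial}$-complex. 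Since the difference between the coupled operator and the decoupled one is an order-zero term with bounded coefficients (the Christoffel-type symbols of a smooth connection on the ambient trivial bundle), it is a relatively compact perturbation — or, more cleanly, one shows directly that a form lies in $\Dom(D_V)$ near $x$ iff its components lie in $\Dom(D_{\mathrm{triv}})$, with equivalent graph norms. Away from $X_{sing}$, $X_{reg}$ is a smooth (incomplete) Hermitian manifold and standard interior elliptic estimates give local compactness of $\Dom(D)\hookrightarrow \mathcal H$. A partition-of-unity / Rellich-type patching argument, using that $X$ is compact and $X_{sing}$ finite, then assembles these local compactness statements into global compactness of the resolvent. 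I expect the care needed in the gluing — ensuring the cutoff functions and the perturbation terms are controlled uniformly and that cross terms between the singular neighborhoods and the smooth part do not spoil compactness — to be where most of the real work lies, the rest being a matter of invoking \cite{Ovrelid-Ruppenthal (2014)} and \cite{Bruning-Lesch (1992)}.
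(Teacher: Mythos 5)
Your proposal follows essentially the same route as the paper: density of the commutator algebra via functions with bounded gradient, and compactness of the resolvent by localizing near the singular points (where the holomorphic trivialization of $V$ reduces matters to $N$ copies of the scalar case of \O{}vrelid--Ruppenthal), using ellipticity on the smooth part, and gluing. The paper implements the gluing as an explicit parametrix whose interior piece carries Atiyah--Patodi--Singer boundary conditions, and---the one point where your sketch is loosest---it passes from the special Hermitian metric $h'$ (trivial near $X_{sing}$) to the given $h$ not as a ``relatively compact perturbation'' (changing $h$ changes the inner product and hence the adjoint $\dbar_{V,s}^*$, so this is not an additive perturbation of a fixed self-adjoint operator on a fixed Hilbert space) but by noting that compactness of the resolvent is equivalent to three Hodge-decomposition properties of $\dbar_{V,s}$ that are invariant under replacing the inner product by an equivalent one.
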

\begin{proof}
  Put $D_V = \overline{\partial}_{V,s} + \overline{\partial}_{V,s}^*$, with
  dense domain $\Dom( \dbar_{V,s}) \cap \Dom( \dbar_{V,s}^*)$.
  Put $D = \overline{\partial}_{s} + \overline{\partial}_{s}^*$, the
  case when $V$ is the trivial complex line bundle.
  Put

  \begin{equation} \label{2.2}
{\mathcal A} = \{ f \in C(X) \: : \:  
f(\Dom(D_V)) \subset \Dom(D_V) \text{ and } [D_V,f] \text{ is bounded.} \}
  \end{equation}
Using the local trivializations of $V$, it follows that
\begin{equation} \label{2.3}
  {\mathcal A} = \{ f \in C(X) \: : \:  
f(\Dom(D)) \subset \Dom(D) \text{ and } [D,f] \text{ is bounded.} \}
  \end{equation}
To satisfy the definitions of 
  unbounded analytic $K$-homology
  \cite{Baaj-Julg (1983),Forsyth-Mesland-Rennie (2014),Kaad (2019)},
  we first need to show
  that ${\mathcal A}$ is dense in $C(X)$.

  Given $F \in C(X)$ and $\epsilon > 0$,
  we can construct $f \in C(X)$ so that
  \begin{itemize}
  \item
 For each
 $x_j \in X_{sing}$, there is a neighborhood $U_j \subset X$ of
 $x_j$ on which $f$ is constant, with $f(x_j) = F(x_j)$.
\item $f$ is smooth on $X_{reg}$.
  \item $\sup_{x \in X} |f(x) - F(x)| < \epsilon$.
    \end{itemize}
Then $f(\Dom(D)) \subset \Dom(D)$  and
  $\| [D,f] \| \le \const \| \nabla_h f \|_\infty < \infty$.
It follows that
  ${\mathcal A}$ is dense in $C(X)$.

  To prove the proposition, it now suffices to prove the following lemma.
  \begin{lemma} \label{2.4}
    $(D_V+i)^{-1}$ is compact
    \end{lemma}
  \begin{proof}
    If $V$ is trivial then the lemma is true
    \cite{Ovrelid-Ruppenthal (2014)}. We will use a parametrix
    construction to prove it for general $V$.
    
    We first prove the lemma for a special inner product $h^\prime$ on $V$.
    Write $X_{sing} = \{x_j\}_{j=1}^r$. For each $j$, let $U_j$ be a
    neighborhood of $x_j$ on which $V$ is trivialized as above, with
    $\overline{U_j} \cap \overline{U_k} = \emptyset$ for $j \neq k$. 
    Choose open sets with smooth boundary
    $x_j \in Z_j \subset Y_j \subset W_j \subset U_j$, with
    $\overline{Z_j} \subset Y_j$,
    $\overline{Y_j} \subset W_j$ and
    $\overline{W_j} \subset U_j$.
    Let $\phi_j \in C(X)$ be identically one on $Y_j$, with
    support in $W_j$, and smooth on $U_j - Y_j$. 
    Let $\eta_j \in C(X)$ be identically one on $W_j$, with
    support in $U_j$, and smooth on $U_j - Y_j$, so that
    $\eta_j$ is one on the support of $\phi_j$.
    
    Define an inner product $h^\prime$ on $V$ by first taking it to be a
    trivial inner product on each $U_j$,
    in terms of our given trivializations, and then
    extending it smoothly to the rest of
    $X_{reg}$. Let $V_j$ be the extension of the trivialization
    $U_j \times \C^N$ to a
    product bundle on $X \times \C^N$ on $X$,
    as a smooth vector bundle with trivial
    inner product.  Let $D_{V_j} = D \otimes I_N$ be the
    corresponding operator.  
    As $(D+i)^{-1}$ is compact \cite{Ovrelid-Ruppenthal (2014)}, the
    same is true for $D_{V_j}$. Let $D_{APS}$ be the
    operator $\dbar_{V} + \dbar_V^*$ on
    $X - \bigcup_j Z_j$, with Atiyah-Patodi-Singer boundary conditions
    \cite{Atiyah-Patodi-Singer (1973)}.
    (The paper \cite{Atiyah-Patodi-Singer (1973)} assumes a product structure
    near the boundary, but this is not necessary.)
    Then $(D_{APS} + i)^{-1}$ is compact.
    Put $\phi_0 = 1 - \sum_j \phi_j$, with support in
    $X - \bigcup_j \overline{Z_j}$. Pick $\eta_0 \in C(X)$ with support in
    $X - \bigcup_j \overline{Z_j}$, and smooth on $X_{reg}$, such that
    $\eta_0$ is one on the support of $\phi_0$.

For $\omega \in \Omega_{L^2}^{0,*}(X_{reg}; V)$, put
\begin{equation} \label{2.5}
  Q \omega =  \eta_0 (D_{APS}+i)^{-1}  (\phi_0 \omega) +
  \sum_j \eta_j (D_{V_j}+i)^{-1}  (\phi_j \omega).
\end{equation}
Then $Q$ is compact and
\begin{equation} \label{2.6}
  (D_V + i) Q \omega = \omega + [D, \eta_0] (D_{APS}+i)^{-1} (\phi_0 \omega) +
  \sum_j [D, \eta_j] (D_{V_j}+i)^{-1} (\phi_j \omega),
\end{equation}
so
\begin{equation} \label{2.7}
  (D_V + i)^{-1} = Q - (D_V + i)^{-1}
  \left(  [D,\eta_0] (D_{APS}+i)^{-1} \phi_0  +
  \sum_j [D, \eta_j] (D_{V_j}+i)^{-1} \phi_j  \right) . 
\end{equation}
As $[D, \eta_0]$, $[D, \eta_j]$ and $(D_V + i)^{-1}$ are bounded,
it follows that
$(D_V + i)^{-1}$ is compact.

As $(D_V + i)^{-1}$ (for the inner product $h^\prime$) is compact,
the spectral theorem for compact operators and the functional calculus
imply that $(I + D_V^2)^{-1}$ is compact.  Writing 
$\triangle_{V,s} = D_V^2$,
there is then a Hodge
    decomposition
    \begin{equation} \label{2.8}
      \Omega_{L^2}^{0,*}(X_{reg}; V) = 
      \Ker(\triangle_{V,s}^{0,\star}) \oplus \Image(\dbar_{V,s}) \oplus
      \Image(\dbar_{V,s}^*)
    \end{equation}
    where the right-hand side is a sum of orthogonal closed subspaces.
    In particular,
\begin{enumerate}
\item    $\Image(\dbar_{V,s})$ is closed,
\item    $\Ker(\dbar_{V,s})/\Image(\dbar_{V,s})$ is finite dimensional and
\item    The map $\dbar_{V,s} \: : \: \Omega_{L^2}^{0,*}(X_{reg}; V)/
    \Ker(\dbar_{V,s}) \rightarrow \Image(\dbar_{V,s})$ is invertible and the
    inverse is compact, i.e. sends bounded sets to precompact sets.
\end{enumerate}
(The inverse map $\Image(\dbar_{V,s})
\rightarrow \Omega_{L^2}^{0,*}(X_{reg}; V)/
\Ker(\dbar_{V,s}) \cong \Image(\dbar_{V,s}^*)$
is $DG$, where $G$ is the Green's operator for
$\triangle_{V,s}$.)
    As the $L^2$-inner products on
    $\Omega_{L^2}^{0,*}(X_{reg}; V)$ coming from $h^\prime$ and $h$ are
    relatively bounded, the above three properties also hold for $h$.
    It follows that there is a Hodge decomposition relative to the
    inner product $h$, and
    $(I+D_V^2)^{-1}$ is compact.
    Hence $(D_V + i)^{-1}$ is compact.
\end{proof}
  This proves the proposition.
\end{proof}

\section{Resolution} \label{section3}

In this section we construct a certain resolution of the sheaf of holomorphic
sections of a holomorphic vector bundle $V$ on $X$.
To begin,
we define a sheaf
$\underline{\Dom}(\overline{\partial}_{V,s}^{0,\star})$ on $X$,
following \cite[Section 2.1]{Ruppenthal (2018)}. 

Given an open set $U \subset X$ and a compact subset $K \subset U$, we write
$U_{reg}$ for $U \cap X_{reg}$ and $K_{reg}$ for $K \cap X_{reg}$.

Let $V$ be a finite dimensional
holomorphic vector bundle on $X$
equipped with a Hermitian metric, in the sense of Section \ref{section2}.
There is a sheaf $\underline{\Omega}_{V,L^2_{loc}}^{0,\star}$ on $X$
whose sections over an open set $U \subset X$ are the
locally square integrable $V$-valued
forms of degree $(0, \star)$ on $U_{reg}$,
i.e. they are square integrable on $K_{reg}$ for any
compact set $K \subset U$. Convergence will mean $L^2$-convergence
on each such $K_{reg}$.
By definition, the sections of
$\underline{\Dom}(\overline{\partial}_{V,s}^{0,\star})$ over $U$ are
the elements $\omega \in {\Omega}_{L^2_{loc}}^{0,\star}(U_{reg}; V)$ so that
there are
\begin{itemize}
  \item A sequence $f_i \in {\Omega}_{C^\infty_c}^{0,\star}(U_{reg}; V)$
    and
  \item
    Some $\eta \in {\Omega}_{L^2_{loc}}^{0,\star+1}(U_{reg}; V)$
\end{itemize}
such
that for any compact $K \subset U$, we have
\begin{itemize}
\item   $\lim_{i \rightarrow \infty} f_i = \omega $ in
  ${\Omega}_{L^2}^{0,\star}(K_{reg}; V)$ and
\item  $\lim_{i \rightarrow \infty} \overline{\partial}_V f_i =
  \eta $ in
  ${\Omega}_{L^2}^{0,\star+1}(K_{reg}; V)$.
\end{itemize}
Then we put
  $\dbar_V \omega = \eta$.
  
  This gives a complex of fine sheaves
  \begin{equation} \label{3.1}
    \ldots \stackrel{\dbar_V}{\longrightarrow}
    \underline{\Dom}(\overline{\partial}_{V,s}^{0,\star-1})
    \stackrel{\dbar_V}{\longrightarrow}
    \underline{\Dom}(\overline{\partial}_{V,s}^{0,\star})
    \stackrel{\dbar_V}{\longrightarrow}
    \underline{\Dom}(\overline{\partial}_{V,s}^{0,\star+1})
    \stackrel{\dbar_V}{\longrightarrow}
    \ldots
  \end{equation}
  The cohomology of the complex is the sheaf
  $\underline{\HH}^{0,\star}(\overline{\partial}_{V,s})$.
  For $\star > 0$,
it is a direct sum of skyscraper sheaves, with support in $X_{sing}$.
  We write $\underline{V}_s$ for
  $\underline{\HH}^{0,0}(\overline{\partial}_{V,s})$, i.e. the kernel of
  $\dbar_V$ acting on 
  $\underline{\Dom}(\overline{\partial}_{V,s}^{0,0})$. Then
  $\underline{V}_s/\underline{V}$ is also
  a direct sum of skyscraper sheaves with support in $X_{sing}$.

  Although we will not need it here, there is a description of these
  skyscraper sheaves in terms of a resolution of $X$.
  Suppose that $\pi \: : \: M \rightarrow X$ is a resolution.
  From \cite[Corollary 1.2]{Ruppenthal (2018)}, if $x \in X$ then
  we can identify the stalk
  $(\underline{\HH}^{0,q}(\overline{\partial}_{V,s}))_x$ with
  $V_x \otimes (R^q \pi_* {\mathcal O}_M)_x$. In particular,
  we can identify 
  $\underline{V}_s$ with
  $\underline{V} \otimes_{{\mathcal O}_X} \pi_* {\mathcal O}_M$ or,
  more intrinsically, with the sheaf of weakly holomorphic sections of $V$,
  i.e. bounded holomorphic sections of
  $V \Big|_{X_{reg}}$.

  There is a quotient morphism of sheaves:
  $q \: : \: \underline{\Ker}(\overline{\partial}_{V,s}^{0,\star}) \rightarrow
  \underline{\HH}^{0,\star}(\overline{\partial}_{V,s})$.
  As $\underline{\HH}^{0,\star}(\overline{\partial}_{V,s})$
  is an injective sheaf
  for $\star > 0$,
  we can extend $q$ to a morphism
  $\alpha \: : \: \underline{\Dom}(\overline{\partial}_{V,s}^{0,\star})
  \rightarrow
  \underline{\HH}^{0,\star}(\overline{\partial}_{V,s})$.
  More specifically, if $x$ is a singular point then the stalk
  $(\underline{\HH}^{0,\star}(\overline{\partial}_{V,s}))_x$
  is a finite dimensional
  complex vector space, so we are extending the quotient map
  $q_x \: : \: (\underline{\Ker}(\overline{\partial}_{V,s}^{0,\star}))_x
  \rightarrow
  (\underline{\HH}^{0,\star}(\overline{\partial}_{V,s}))_x$ from the
  germs of $\dbar_V$-closed $V$-valued forms at $x$, to the germs of
  forms in the domain of $\dbar_{V,s}$.

  Considering $\underline{\HH}^{0,\star}(\overline{\partial}_{V,s})$ to be a
  complex of sheaves with zero differential,
  $\alpha$ is a morphism of complexes that is an
  isomorphism on cohomology in degree $\star > 0$ by construction.
  Let  $\underline{\cone}(\alpha_V)$ be the mapping cone of $\alpha_V$,
  with $\underline{\cone}^{0,\star}(\alpha_V) =
  \underline{\Dom}(\overline{\partial}_{V,s}^{0,\star}) \oplus
  \underline{\HH}^{0,\star-1}(\overline{\partial}_{V,s})$ and differential
  $d_{\cone} (\omega, h) = (\dbar_V \omega, \alpha_V(\omega))$.
  It has vanishing cohomology in degree $\star > 1$.
  Define a complex of sheaves $\underline{\mathcal C}_V^{0,\star}$ by
    \begin{equation} \label{3.2}
    \underline{\mathcal C}_V^{0,\star} =
    \begin{cases}
    \underline{\Dom}(\overline{\partial}_{V,s}^{0,0}), & \star = 0 \\
    \underline{\Dom}(\overline{\partial}_{V,s}^{0,1}), & \star = 1 \\
    \underline{\Dom}(\overline{\partial}_{V,s}^{0,\star}) \oplus
  \underline{\HH}^{0,\star-1}(\overline{\partial}_{V,s}), & \star > 1
      \end{cases}
  \end{equation}
  where the differential in degree $\star = 0$ is $\dbar_V$,
  the differential in degree $\star = 1$ is $(\dbar_V, \alpha_V)$, and
  the differential in degrees $\star > 1$ is $d_{\cone}$.
  Then $\underline{\mathcal C}_V$ is a resolution of
  $\underline{V}_s$ by fine sheaves.

  There is a short exact sequence of sheaves
  \begin{equation} \label{3.3}
    0 \longrightarrow \underline{V} \longrightarrow
    \underline{V}_s \longrightarrow \underline{V}_s/\underline{V}
    \longrightarrow 0.
    \end{equation}
  We can think of $\underline{V}_s/\underline{V}$ as a resolution of itself, when
  concentrated in degree zero. Together with the resolution of
  $\underline{V}_s$ from (\ref{3.2}), we can construct a resolution of
  $\underline{V}$ as follows.
  As $\underline{V}_s/\underline{V}$ is a finite sum of skyscraper sheaves,
  we can extend the quotient map
  $\underline{V}_s \rightarrow \underline{V}_s/\underline{V}$ to a
  morphism
  $\beta_V \: : \:  \underline{\Dom}(\overline{\partial}_{V,s}^{0,0})
  \rightarrow \underline{V}_s/\underline{V}$.
  Define a complex of sheaves $\underline{\widetilde{\mathcal C}}_V$ by
  \begin{equation} \label{3.4}
    \underline{\widetilde{\mathcal C}}_V^{0,\star} =
    \begin{cases}
    \underline{\Dom}(\overline{\partial}_{V,s}^{0,0}), & \star = 0 \\
    \underline{\Dom}(\overline{\partial}_{V,s}^{0,1})
    \oplus \underline{V}_s/\underline{V}, & \star = 1 \\
    \underline{\Dom}(\overline{\partial}_{V,s}^{0,\star}) \oplus
  \underline{\HH}^{0,\star-1}(\overline{\partial}_{V,s}), & \star > 1
      \end{cases}
    \end{equation}
  where the differential in degree $\star = 0$ is $(\dbar_V, \beta_V)$,
  the differential in degree $\star = 1$ sends
  $(\omega, v)$ to $(\dbar_V \omega, \alpha_V(\omega))$, and
  the differential in degrees $\star > 1$ is $d_{\cone}$. Then
    $\underline{\widetilde{\mathcal C}}_V$ is a resolution of
    $\underline{V}$ by fine sheaves; c.f.
    \cite[Pf. of Proposition I.6.10]{Iversen (1986)}.

    Taking global sections of $\widetilde{\mathcal C}^{0,\star}_V$
    gives a cochain complex $(\widetilde{T}_V, \widetilde{d}_V)$:
    \begin{align} \label{3.5}
&       0 \rightarrow {\Dom}(\overline{\partial}_{V,s}^{0,0})
      \rightarrow {\Dom}(\overline{\partial}_{V,s}^{0,1})
      \oplus (\underline{V}_s/\underline{V})(X) \rightarrow
      {\Dom}(\overline{\partial}_{V,s}^{0,2}) \oplus
      (\underline{\HH}^{0,1}(\overline{\partial}_{V,s}))(X)
      \rightarrow \\
      & \ldots \rightarrow
      {\Dom}(\overline{\partial}_{V,s}^{0,n}) \oplus
      (\underline{\HH}^{0,n-1}(\overline{\partial}_{V,s}))(X)
            \rightarrow 0. \notag
    \end{align}
    For the last term,
    we use the fact that in terms of a resolution $\pi \: : \:
    M \rightarrow X$, we have
    $(\underline{\HH}^{0,n}(\overline{\partial}_{V,s}))_x =
    V_x \otimes (R^n \pi_* {\mathcal O}_M)_x = 0$.

    \begin{proposition} \label{3.6}
      The cohomology of
      $(\widetilde{T}_V, \widetilde{d}_V)$
      is isomorphic to $\HH^{*}(X; \underline{V})$.
      \end{proposition}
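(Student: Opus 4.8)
The plan is to recognize Proposition \ref{3.6} as an instance of the abstract de Rham theorem applied to the resolution $\underline{\widetilde{\mathcal C}}_V$ of $\underline{V}$ constructed just above. Since $X$ is a compact, hence paracompact, complex space, every fine sheaf on $X$ is acyclic, so (granting that the terms $\underline{\widetilde{\mathcal C}}_V^{0,\star}$ are fine and that $\underline{\widetilde{\mathcal C}}_V$ resolves $\underline{V}$) the standard argument --- e.g.\ via the hypercohomology spectral sequence, which collapses onto the single row $q=0$ --- shows that the cohomology of the complex of global sections $\Gamma(X; \underline{\widetilde{\mathcal C}}_V)$ is canonically isomorphic to $\HH^{*}(X; \underline{V}) = \HH^{*}(X; V)$. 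Thus the proof reduces to two bookkeeping points: that $\Gamma(X; \underline{\widetilde{\mathcal C}}_V)$ is precisely the complex $(\widetilde{T}_V, \widetilde{d}_V)$ of (\ref{3.5}), and that the fineness hypothesis actually holds.

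For the skyscraper summands $\underline{\HH}^{0,\star-1}(\overline{\partial}_{V,s})$ and $\underline{V}_s/\underline{V}$ both points are immediate, as these sheaves are fine and their global sections are the vector spaces $(\underline{\HH}^{0,\star-1}(\overline{\partial}_{V,s}))(X)$ and $(\underline{V}_s/\underline{V})(X)$. The content lies in the sheaves $\underline{\Dom}(\overline{\partial}_{V,s}^{0,\star})$, where one must check that $\Gamma(X; \underline{\Dom}(\overline{\partial}_{V,s}^{0,\star})) = \Dom(\overline{\partial}_{V,s}^{0,\star})$, the latter being the global minimal domain of Section \ref{section2}. The inclusion $\supseteq$ is clear, and for $\subseteq$ one uses that $X$ is compact (so $L^2_{loc} = L^2$) together with a finite partition of unity $\{\rho_j\}_{j=0}^r$ in which $\rho_0$ is compactly supported in $X_{reg}$, each $\rho_j$ ($j\ge 1$) is supported in a small relatively compact neighborhood $U_j$ of the singular point $x_j$ and is identically one near $x_j$, and each $\rho_j$ is smooth on $X_{reg}$; then $\overline{\partial}\rho_j$ is a bounded smooth form with compact support in $(U_j)_{reg}$. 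Multiplication by such a $\rho_j$ preserves the local minimal domain via the Leibniz rule $\overline{\partial}_V(\rho_j\omega) = \overline{\partial}\rho_j\wedge\omega + \rho_j\overline{\partial}_V\omega$, and because $\rho_j\omega$ has compact support a local approximating sequence of smooth compactly supported forms extends by zero to a global one; summing over $j$ puts $\omega$ in the global minimal domain. The same $\rho_j$ act as well-defined sheaf endomorphisms of $\underline{\Dom}(\overline{\partial}_{V,s}^{0,\star})$ summing to the identity, exhibiting it as a fine sheaf, and a direct sum of a fine sheaf with a skyscraper sheaf is again fine, so all $\underline{\widetilde{\mathcal C}}_V^{0,\star}$ are fine.

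Finally, the exactness of $\underline{\widetilde{\mathcal C}}_V$ with degree-zero cohomology $\underline{V}$ is exactly what the construction before the proposition arranges: $\underline{\mathcal C}_V$ resolves $\underline{V}_s$ as a truncated mapping cone of $\alpha_V$, which is a quasi-isomorphism in degrees $> 0$; $\underline{V}_s/\underline{V}$, concentrated in degree $0$, resolves itself; and the mapping-cone construction built from the short exact sequence $0 \to \underline{V} \to \underline{V}_s \to \underline{V}_s/\underline{V} \to 0$, using the injectivity of the skyscraper sheaf $\underline{V}_s/\underline{V}$ to extend the quotient map to $\beta_V$, yields a resolution of $\underline{V}$ in the standard way (cf.\ \cite[Pf. of Proposition I.6.10]{Iversen (1986)}); in particular the degree-zero cocycles $\{\omega \in \underline{\Dom}(\overline{\partial}_{V,s}^{0,0}) : \overline{\partial}_V\omega = 0 \text{ and } \beta_V(\omega) = 0\}$ coincide with $\underline{V}$, and higher exactness follows from that of $\underline{\mathcal C}_V$. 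With this the argument of the first paragraph applies verbatim. The only step that is not purely formal is the local-to-global identification of the domain sheaves together with their fineness, and the essential device there is the availability of cutoff functions that are locally constant near the (finitely many) singular points; the remainder is the textbook acyclic-resolution argument.
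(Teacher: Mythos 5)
Your proposal is correct and follows essentially the same route as the paper, whose entire proof is the one-line observation that $\underline{\widetilde{\mathcal C}}_V$ is a resolution of $\underline{V}$ by fine sheaves, so the abstract de Rham theorem applies. The additional bookkeeping you supply --- that $\Gamma(X;\underline{\Dom}(\overline{\partial}_{V,s}^{0,\star}))$ coincides with the global minimal domain, and that fineness follows from cutoff functions locally constant near the finitely many singular points --- consists precisely of the details the paper leaves implicit in writing down (\ref{3.5}), and your verifications of them are sound.
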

    \begin{proof}
      This holds because
      $\underline{\widetilde{\mathcal C}}_V$ is a resolution of
    $\underline{V}$ by fine sheaves.
      \end{proof}

    Put arbitrary inner products on the finite dimensional vector spaces
    $(\underline{V}_s/\underline{V})(X)$ and
    $(\underline{\HH}^{0,*}(\overline{\partial}_{V,s}))(X)$.
    
    \section{Hilbert complex} \label{section4}

    The differential $\widetilde{d}_V$ in
    the Hilbert complex
    $(\widetilde{T}_V, \widetilde{d}_V)$ of the previous section
    involved somewhat arbitrary choices of
    $\alpha_V$ and $\beta_V$. 
    In this section we replace 
    $(\widetilde{T}_V, \widetilde{d}_V)$
    by a more canonical
    Hilbert complex $(T_V, d_V)$.
    
    For brevity of notation, we put
    \begin{equation} \label{4.1}
      A_V^* = \begin{cases}
        (\underline{V}_s/\underline{V})(X), & \star = 0 \\
        (\underline{\HH}^{0,\star}(\dbar_{V,s}))(X), & \star > 0.
        \end{cases}
    \end{equation}
    Then the complex $\widetilde{T}_V$ has entries
    $\widetilde{T}^{0,\star}_V =
    \Dom( \dbar^{0,\star}_{V,s}) \oplus  A_V^{\star-1}$
    Combining $\alpha_V$ and $\beta_V$, we have constructed a linear map
    $\gamma_V : \Dom( \dbar^{0,\star}_{V,s}) \rightarrow A_V^\star$ so that
    the differential of $\widetilde{T}_V$ is given by
\begin{equation} \label{4.2}
  \widetilde{d}_V(\omega, a) = (\partial_V \omega, \gamma_V(\omega)).
\end{equation}
Note that
    $\gamma_V \circ \dbar_{V,s} = 0$.

    Let $P_{\Ker(\triangle_{V,s}^{0,\star})}$ be orthogonal projection onto
    $\Ker(\triangle_{V,s}^{0,\star}) \subset
    \Omega^{0,\star}_{L^2}(X_{reg}; V)$.
    Define a new differential $d_V$ on $\widetilde{T}_V$ by
\begin{equation} \label{4.3}
    d_V(\omega, a) = (\partial_V \omega,
    \gamma_V(P_{\Ker(\triangle_{V,s}^{0,\star})} \omega)).
    \end{equation}
Call the resulting cochain complex $(T_V, d_V)$.

    As in (\ref{2.8}), there is a Hodge
    decomposition
    \begin{equation} \label{4.4}
      \Dom( \dbar^{0,\star}_{V,s}) =
      \Ker(\triangle_{V,s}^{0,\star}) \oplus \Image(\dbar_{V,s}) \oplus
      \Image(\dbar_{V,s}^*).
      \end{equation}
    Here the terms on the right-hand side of (\ref{4.4}) are the intersections
    of $\Dom( \dbar^{0,\star}_{V,s})$ with the corresponding terms in
    (\ref{2.8}). In particular,
    $\Ker(\triangle_{V,s}^{0,\star})$ and $\Image(\dbar_{V,s})$
    are the same, while the elements of $\Image(\dbar_{V,s}^*)$ now lie in an
    $H^1$-space.
Put
    \begin{equation} \label{4.5}
      {\mathcal I}_V = \dbar_{V,s}
\big|_{\Image(\dbar_{V,s}^*)}
      \: : \: \Image(\dbar_{V,s}^*) \rightarrow
    \Image(\dbar_{V,s}),
    \end{equation}
    an isomorphism.

    Define a linear map $m_V : \Dom( \dbar^{0,\star}_{V,s}) \oplus
    A_V^{\star-1} \rightarrow \Dom( \dbar^{0,\star}_{V,s}) \oplus
    A_V^{\star-1}$ by saying that if
\begin{equation} \label{4.6}
    (h, \omega_1, \omega_2, a) \in 
\Ker(\triangle_{V,s}^{0,\star}) \oplus \Image(\dbar_{V,s}) \oplus
\Image(\dbar_{V,s}^*) \oplus A_V^{\star -1}
\end{equation}
then
\begin{equation} \label{4.7}
m_V (h, \omega_1, \omega_2, a) =
(h, \omega_1, \omega_2, a + \gamma_V ({\mathcal I}^{-1}(\omega_1))).
\end{equation}
Its inverse is
given by
\begin{equation} \label{4.8}
m_V^{-1} (h, \omega_1, \omega_2, a) =
(h, \omega_1, \omega_2, a - \gamma_V ({\mathcal I}^{-1}(\omega_1))).
\end{equation}
    
\begin{proposition} \label{4.9}
  The linear maps $m_V$ and $m_V^{-1}$ are chain maps between
  $(T_V, d_V)$
  and
$(T_V, \widetilde{d}_V)$,
  i.e.
  $m_V \circ d_V = \widetilde{d}_V \circ m_V$ and
  $m_V^{-1} \circ \widetilde{d}_V = d_V \circ m^{-1}_V$.
  \end{proposition}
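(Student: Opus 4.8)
The plan is to verify directly the first identity $m_V\circ d_V = \widetilde{d}_V\circ m_V$, and then to obtain the second identity $m_V^{-1}\circ\widetilde{d}_V = d_V\circ m_V^{-1}$ by conjugating the first one by the isomorphism $m_V$ (multiply the first identity on the left and on the right by $m_V^{-1}$). So the whole proposition reduces to a single bookkeeping computation in the Hodge coordinates of (\ref{4.4}).

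To carry it out, I would fix a degree $\star$, take an element of $\widetilde{T}_V^{0,\star}$ written as $(h,\omega_1,\omega_2,a)\in\Ker(\triangle_{V,s}^{0,\star})\oplus\Image(\dbar_{V,s})\oplus\Image(\dbar_{V,s}^*)\oplus A_V^{\star-1}$ via (\ref{4.4}) and (\ref{4.6}), and set $\omega = h+\omega_1+\omega_2\in\Dom(\dbar_{V,s}^{0,\star})$. First I would record the three elementary facts that drive the computation: (i) harmonic forms and forms in $\Image(\dbar_{V,s})$ are $\dbar_V$-closed, so $\dbar_V\omega = \dbar_V\omega_2$, and $P_{\Ker(\triangle_{V,s}^{0,\star})}\omega = h$ since $\omega_1$ and $\omega_2$ are orthogonal to the harmonic forms; (ii) $\gamma_V\circ\dbar_{V,s} = 0$ (noted just after (\ref{4.2})), so $\gamma_V(\omega_1) = 0$ and hence $\gamma_V(\omega) = \gamma_V(h)+\gamma_V(\omega_2)$; and (iii) by (\ref{4.2}) and (\ref{4.3}), both $\widetilde{d}_V$ and $d_V$ ignore the $A_V^{\star-1}$-slot of their argument. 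Since $m_V$ leaves the $\Dom(\dbar_{V,s}^{0,\star})$-component unchanged, (iii) gives $\widetilde{d}_V(m_V(\omega,a)) = \widetilde{d}_V(\omega,a) = (\dbar_V\omega_2,\ \gamma_V(h)+\gamma_V(\omega_2))$ using (i) and (ii).

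For the other composition, (i) gives $d_V(\omega,a) = (\dbar_V\omega,\ \gamma_V(P_{\Ker(\triangle_{V,s}^{0,\star})}\omega)) = (\dbar_V\omega_2,\ \gamma_V(h))$. The form $\dbar_V\omega_2 = \dbar_{V,s}\omega_2$ lies in $\Image(\dbar_{V,s})\subset\Dom(\dbar_{V,s}^{0,\star+1})$, so in the degree-$(\star+1)$ coordinates of (\ref{4.4}) its harmonic and $\Image(\dbar_{V,s}^*)$ components vanish and its $\Image(\dbar_{V,s})$-component equals $\dbar_V\omega_2$. Applying (\ref{4.7}) in degree $\star+1$ therefore adds $\gamma_V(\mathcal{I}_V^{-1}(\dbar_V\omega_2))$ to the last slot. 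The key identity is $\mathcal{I}_V^{-1}(\dbar_V\omega_2) = \omega_2$: by the definition (\ref{4.5}), $\mathcal{I}_V$ is $\dbar_{V,s}$ restricted to $\Image(\dbar_{V,s}^*)$, and $\omega_2$ lies in that subspace, so $\mathcal{I}_V(\omega_2) = \dbar_{V,s}\omega_2 = \dbar_V\omega_2$, whence $\mathcal{I}_V^{-1}(\dbar_V\omega_2) = \omega_2$ because $\mathcal{I}_V$ is an isomorphism. Thus $m_V(d_V(\omega,a)) = (\dbar_V\omega_2,\ \gamma_V(h)+\gamma_V(\omega_2))$, which matches $\widetilde{d}_V(m_V(\omega,a))$. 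This proves $m_V\circ d_V = \widetilde{d}_V\circ m_V$, and then $d_V\circ m_V^{-1} = m_V^{-1}\circ\widetilde{d}_V$ follows by the conjugation described at the start.

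I expect the only real difficulty to be the degree bookkeeping: one must apply $\mathcal{I}_V$, $\gamma_V$ and $P_{\Ker(\triangle_{V,s}^{0,\star})}$ in the correct degree at each stage (the $m_V$ on the left side of $m_V\circ d_V$ acts on $\widetilde{T}_V^{0,\star+1}$, the $m_V$ on the right side of $\widetilde{d}_V\circ m_V$ acts on $\widetilde{T}_V^{0,\star}$), and one must use that $\dbar_V$ carries the degree-$\star$ subspace $\Image(\dbar_{V,s}^*)$ isomorphically onto the degree-$(\star+1)$ subspace $\Image(\dbar_{V,s})$, so that $\mathcal{I}_V^{-1}(\dbar_V\omega_2) = \omega_2$ is literally the invertibility of $\mathcal{I}_V$. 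No analytic input is needed beyond the Hodge decomposition (\ref{4.4}) and the relation $\gamma_V\circ\dbar_{V,s} = 0$.
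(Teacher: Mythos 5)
Your proposal is correct and follows essentially the same route as the paper: both compute $m_V\circ d_V$ and $\widetilde d_V\circ m_V$ on an element $(h,\omega_1,\omega_2,a)$ in the Hodge coordinates of (\ref{4.4})--(\ref{4.6}), using that $h$ and $\omega_1$ are $\dbar_V$-closed, that $\gamma_V\circ\dbar_{V,s}=0$, and that ${\mathcal I}_V^{-1}(\dbar_V\omega_2)=\omega_2$, arriving at $(0,\dbar_V\omega_2,0,\gamma_V(h)+\gamma_V(\omega_2))$ on both sides. The only (harmless) difference is that you deduce the second identity by conjugating the first with $m_V^{-1}$, whereas the paper says the second verification is ``similar.''
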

\begin{proof}
  We will check that $m_V \circ d_V = \widetilde{d}_V \circ m_V$;
  the proof that
  $m_V^{-1} \circ \widetilde{d}_V = d_V \circ m_V^{-1}$ is similar.

  Given $(h, \omega_1, \omega_2, a)$ as in (\ref{4.6}), we have

  \begin{align} \label{4.10}
    d_V (h, \omega_1, \omega_2, a) =
    & (0, \dbar_{V} \omega_2, 0, \gamma_V(h)), \\
    m_V(d_V (h, \omega_1, \omega_2, a)) =
    & (0, \dbar_{V} \omega_2, 0, \gamma_V(h) + \gamma_V(\omega_2)), \notag \\
    m_V (h, \omega_1, \omega_2, a) =
& (h, \omega_1, \omega_2, a + \gamma_V ({\mathcal I}^{-1}(\omega_1))), \notag \\
   \widetilde{d}_V( m_V (h, \omega_1, \omega_2, a)) =
& (0, \dbar_V \omega_2, 0, \gamma_V(h) + \gamma_V(\omega_2)).  \notag
    \end{align}
  This proves the proposition.
  \end{proof}

\begin{theorem} \label{4.11}
      The cohomology of
      $(T_V, {d}_V)$
      is isomorphic to $\HH^{*}(X; \underline{V})$.
\end{theorem}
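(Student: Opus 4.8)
The plan is to deduce Theorem \ref{4.11} from Proposition \ref{3.6} together with Proposition \ref{4.9}. Proposition \ref{3.6} already tells us that the cohomology of $(\widetilde{T}_V, \widetilde{d}_V)$ is isomorphic to $\HH^*(X; V)$, so it suffices to show that $(T_V, d_V)$ and $(\widetilde{T}_V, \widetilde{d}_V)$ have isomorphic cohomology. But Proposition \ref{4.9} does better than that: it produces explicit chain maps $m_V : (T_V, d_V) \to (\widetilde{T}_V, \widetilde{d}_V)$ and $m_V^{-1} : (\widetilde{T}_V, \widetilde{d}_V) \to (T_V, d_V)$ which are mutually inverse as linear maps of the underlying graded vector spaces. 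Hence $m_V$ is an isomorphism of cochain complexes, in particular a quasi-isomorphism, and it induces an isomorphism on cohomology in every degree.

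So the proof is essentially two lines: first invoke Proposition \ref{3.6} to identify $\HH^*(\widetilde{T}_V, \widetilde{d}_V) \cong \HH^*(X; V)$; then invoke Proposition \ref{4.9}, noting that $m_V$ and $m_V^{-1}$ are inverse linear isomorphisms (as is visible from formulas (\ref{4.7}) and (\ref{4.8})) and commute with the differentials, so that $m_V$ is an isomorphism of complexes and therefore $\HH^*(T_V, d_V) \cong \HH^*(\widetilde{T}_V, \widetilde{d}_V)$. Composing the two isomorphisms gives $\HH^*(T_V, d_V) \cong \HH^*(X; V)$.

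There is really no serious obstacle here, since the hard analytic and sheaf-theoretic work is already packaged into the earlier results; the only thing worth a sentence of care is checking that $T_V$ and $\widetilde{T}_V$ genuinely have the same underlying graded vector space (they do, by construction: both have $\star$-th term $\Dom(\dbar_{V,s}^{0,\star}) \oplus A_V^{\star-1}$, differing only in the choice of differential), so that speaking of $m_V$ as a map between them makes sense. One should also note that $m_V$ preserves degree, which is immediate from (\ref{4.7}). With these remarks, the theorem follows formally.

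\begin{proof}
By Proposition \ref{3.6}, the cohomology of $(\widetilde{T}_V, \widetilde{d}_V)$ is isomorphic to $\HH^*(X; V)$. By construction, $(T_V, d_V)$ and $(\widetilde{T}_V, \widetilde{d}_V)$ have the same underlying graded vector space, with $\star$-th term $\Dom(\dbar_{V,s}^{0,\star}) \oplus A_V^{\star-1}$; they differ only in the choice of differential. The maps $m_V$ and $m_V^{-1}$ of (\ref{4.7}) and (\ref{4.8}) are degree-preserving mutually inverse linear isomorphisms of this graded vector space, and by Proposition \ref{4.9} they intertwine $d_V$ and $\widetilde{d}_V$. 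Hence $m_V$ is an isomorphism of cochain complexes from $(T_V, d_V)$ to $(\widetilde{T}_V, \widetilde{d}_V)$, and so induces an isomorphism on cohomology. Therefore the cohomology of $(T_V, d_V)$ is isomorphic to that of $(\widetilde{T}_V, \widetilde{d}_V)$, which is isomorphic to $\HH^*(X; V)$.
\end{proof}
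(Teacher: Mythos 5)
Your proof is correct and follows exactly the paper's route: the paper's own proof simply cites Propositions \ref{3.6} and \ref{4.9}, and your argument fills in precisely the intended details (that $m_V$ and $m_V^{-1}$ are mutually inverse chain maps between the two complexes on the same underlying graded vector space, hence give an isomorphism on cohomology).
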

    \begin{proof}
This follows from Propositions \ref{3.6} and \ref{4.9}.
      \end{proof}

    We can now
    reprove a result from \cite[Example 18.3.3 on p. 362]{Fulton (1998)}.

\begin{proposition} \label{4.12}
In terms of a resolution $\pi \: : \: M \rightarrow X$, we have
  \begin{align} \label{4.13}
\sum_{i=0}^n (-1)^i \dim(\HH^i(X; {\mathcal O}_X)) =
&  \int_M \Td(TM)
  - \dim(
  (\pi_* {\mathcal O_M}/{\mathcal O}_X)(X)) + \\
&  \sum_{i=1}^n (-1)^{i-1} 
    \dim( (R^i \pi_* {\mathcal O}_M )(X)). \notag
\end{align}
\end{proposition}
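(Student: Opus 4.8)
The plan is to compute the Euler characteristic $\sum_i (-1)^i \dim(\HH^i(X;\mathcal O_X))$ by taking the alternating sum of the dimensions appearing in the complex $(\widetilde T_{\mathcal O_X}, \widetilde d_{\mathcal O_X})$ from \eqref{3.5} (equivalently $(T_{\mathcal O_X}, d_{\mathcal O_X})$, since the two are cochain-equivalent by Proposition \ref{4.9}), and then to recognize the resulting ``Dolbeault part'' as an index that can be evaluated on the resolution $M$. Concretely, since $\widetilde T_{\mathcal O_X}$ is a bounded complex of (infinite-dimensional) vector spaces whose cohomology is $\HH^*(X;\mathcal O_X)$, and the extra summands $A^{\star}_{\mathcal O_X} = (\pi_*\mathcal O_M/\mathcal O_X)(X)$ (for $\star = 0$) and $(\underline{\HH}^{0,\star}(\dbar_s))(X) = (R^\star\pi_*\mathcal O_M)(X)$ (for $\star>0$) are finite-dimensional, the alternating-sum bookkeeping gives
\begin{equation} \label{proofprop1}
\sum_{i=0}^n (-1)^i \dim(\HH^i(X;\mathcal O_X)) = \Index(\dbar_s) - \dim\bigl((\pi_*\mathcal O_M/\mathcal O_X)(X)\bigr) + \sum_{i=1}^n (-1)^{i-1}\dim\bigl((R^i\pi_*\mathcal O_M)(X)\bigr),
\end{equation}
where $\Index(\dbar_s) = \sum_{i=0}^n (-1)^i \dim\bigl(\Ker(\triangle_s^{0,i})\bigr)$ is the index of the minimal Dolbeault complex on $X_{reg}$. (The shift in the sign of the $A$-summands relative to the cohomology degree is exactly what produces the $(-1)^{i-1}$ above; this is a routine but necessary check, and one must note that the $A$-summands contribute an honest Euler characteristic because they appear as a subcomplex with zero differential, split off in \eqref{3.2}–\eqref{3.4}.) The finiteness of $\Ker(\triangle_s^{0,i})$ needed to make sense of $\Index(\dbar_s)$ is guaranteed by Proposition \ref{2.1} / Lemma \ref{2.4}, which give the compact resolvent and hence the Hodge decomposition \eqref{2.8}.

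The remaining step is to identify $\Index(\dbar_s)$ with $\int_M \Td(TM)$. Here I would invoke the known identification of the minimal $\dbar_s$-complex on $X_{reg}$ with the pushforward of the ordinary Dolbeault complex on the resolution: by \cite{Ruppenthal (2018)} (Corollary 1.2, already quoted in the excerpt) the cohomology sheaves $\underline{\HH}^{0,\star}(\dbar_s)$ are $R^\star\pi_*\mathcal O_M$ and $\underline{\mathcal O}_s = \pi_*\mathcal O_M$, so the hypercohomology of the complex $\underline{\mathcal C}$ of \eqref{3.2} computes $\HH^*(M;\mathcal O_M)$ — i.e. $\Index(\dbar_s) = \sum_i (-1)^i \dim \HH^i(M;\mathcal O_M) = \chi(M,\mathcal O_M)$. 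By the Hirzebruch–Riemann–Roch theorem on the compact complex (indeed projective) manifold $M$, this equals $\int_M \Td(TM)$. Substituting into \eqref{proofprop1} yields \eqref{4.13}.

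I expect the main obstacle to be making the first step fully rigorous: the complex $\widetilde T_{\mathcal O_X}$ consists of infinite-dimensional spaces, so ``take the alternating sum of dimensions'' is not literally meaningful, and one must instead argue via the long exact sequences attached to the short exact sequences of complexes $0 \to \underline{\Dom}(\dbar_s^{0,*}) \to \underline{\widetilde{\mathcal C}} \to A[\,\text{shift}\,] \to 0$ (i.e. the mapping-cone structure), combined with the fact that $\underline{\Dom}(\dbar_s^{0,*})$ with differential $\dbar_s$ is a Fredholm complex whose Euler characteristic is $\Index(\dbar_s)$ by the Hodge decomposition \eqref{4.4}. In other words, the honest argument is: the cohomology of $\widetilde T_{\mathcal O_X}$ is $\HH^*(X;\mathcal O_X)$, there is a short exact sequence of complexes relating it to the Fredholm complex $(\Dom(\dbar_s^{0,*}),\dbar_s)$ and the finite-dimensional complexes built from the $A$-summands, and additivity of Euler characteristics in long exact sequences of finite-dimensional vector spaces gives \eqref{proofprop1}. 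A secondary point to be careful about is orienting/normalizing the degree shift correctly so that the $\star=0$ term $(\pi_*\mathcal O_M/\mathcal O_X)(X)$ enters with a minus sign and the higher $(R^i\pi_*\mathcal O_M)(X)$ enter with sign $(-1)^{i-1}$; this is exactly the sign combinatorics already visible in \eqref{3.4}–\eqref{3.5}. Everything else — HRR on $M$, the Ruppenthal identification of the stalks — is quoted input.
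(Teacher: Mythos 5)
Your proposal is correct and its skeleton matches the paper's proof: split the Euler characteristic of the complex into the index of the Fredholm complex $(\Dom(\dbar_s^{0,\star}),\dbar_s)$ plus the (sign-shifted) finite-dimensional $A$-summands, identify the latter via Ruppenthal's $\underline{\mathcal O}_s\cong\pi_*\mathcal O_M$ and $\underline{\HH}^{0,i}(\dbar_s)\cong R^i\pi_*\mathcal O_M$, and evaluate $\Index(\dbar_s)$ on the resolution. Where you genuinely diverge is in the last step: the paper simply quotes Pardon--Stern for the term-by-term isomorphism $\HH^i(\dbar_s)\cong\HH^{0,i}(M)$ and then applies Hirzebruch--Riemann--Roch on $M$, whereas you derive the weaker equality $\Index(\dbar_s)=\chi(M;\mathcal O_M)$ from Ruppenthal's sheaf-level statement by comparing the hypercohomology spectral sequence of the fine complex with the Leray spectral sequence of $\pi$. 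That is a legitimate and arguably more economical route (it reuses an input already needed for the correction terms, and only needs equality of Euler characteristics rather than of individual cohomology groups), at the cost of a spectral-sequence argument that you should state for $\underline{\Dom}(\dbar_s^{0,\star})$ itself rather than for $\underline{\mathcal C}$ --- the latter resolves $\pi_*\mathcal O_M$, so its hypercohomology is $\HH^*(X;\pi_*\mathcal O_M)$, not $\HH^*(M;\mathcal O_M)$, and the identification with $\chi(M;\mathcal O_M)$ is only at the level of Euler characteristics via the $E_2$ pages. Your bookkeeping for the first step is also sound and in fact more careful than the paper's one-line ``deform the differential to $\dbar_s\oplus 0$'' (your signs $(-1)^{i-1}$ agree with (\ref{4.13}); note (\ref{4.14}) in the paper has a sign typo), but the short exact sequence runs the other way from what you wrote: the $A$-summands form the \emph{sub}complex (with zero differential, since $\widetilde d_V(0,a)=0$) and $(\Dom(\dbar_s^{0,\star}),\dbar_s)$ is the quotient, because the differential carries the form part into the $A$ part. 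This does not affect the additivity of Euler characteristics, so the argument goes through.
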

\begin{proof}
  Let $(T_1, d_1)$ denote the complex $(T_V, d_V)$
  when the vector bundle $V$ is the trivial bundle.
  From Theorem \ref{4.11}, the left-hand side of (\ref{4.13}) is the index of
  $d_1 + d_1^*$.
  We can deform the chain complex
  $(T_1, d_1)$ to make the differential 
  equal to $\dbar_{s} \oplus 0$ without changing the index.  The
  new index  is
  \begin{equation} \label{4.14}
    \sum_{i=0}^n (-1)^i \dim(\HH^i(\dbar_{s})) -
    \dim(({\mathcal O}_s/{\mathcal O}_X)(X)) + \sum_{i=1}^{n-1}
    (-1)^{i-1} \dim((\underline{\HH}^{0,i}(\overline{\partial}_{s}))(X)).
  \end{equation}
  From \cite{Pardon-Stern (1991)}, we have $\HH^i(\dbar_{s}) \cong
  \HH^{0,i}(M)$, so
  \begin{equation} \label{4.15}
    \sum_{i=0}^n (-1)^i \dim(\HH^i(\dbar_{s})) =
    \sum_{i=0}^n (-1)^i \dim(\HH^{0,i}(M)) = \int_M \Td(TM). 
    \end{equation}
    From \cite[Corollary 1.2]{Ruppenthal (2018)},
  we have ${\mathcal O}_s \cong \pi_* {\mathcal O}_M$ and
  $\underline{\HH}^{0,i}(\overline{\partial}_{s}) \cong
  R^i \pi_* {\mathcal O}_M$. The proposition follows.
\end{proof}

\begin{remark}
  We can write $\int_M \Td(TM) = \int_X \pi_* \Td(TM)$, where we are
  integrating
  a top-degree form on $X_{reg}$. It is not so clear
  what the relevant theory of characteristic classes on $X$ should be, for
  which this would be an example.
  We have in mind a Chern-Weil theory on $X_{reg}$ with control on how the
  forms behave near $X_{sing}$.
  We note that there is a rational
  homology class $\pi_* (PD [\Td(TM)])$ on $X$,
  where $PD [\Td(TM)] \in \HH_{even}(M; \Q)$ is the
  Poincar\'e dual of $[\Td(TM)] \in \HH^{even}(M; \Q)$, and
  if $X$ is connected then
  $\int_M \Td(TM)$ can be identified with the degree-zero component
  of $\pi_* (PD [\Td(TM)])$.
    \end{remark}

\section{K-homology class} \label{section5}

In this section we
prove Theorem \ref{1.5}.  We first show that if
$\pi \: : \: M \rightarrow X$ is a resolution of singularities, with
a simple normal crossing divisor, then
the
K-homology class $[\dbar_s + \dbar_s^*] \in \KK_0(X)$, from
Proposition \ref{2.1} with $V$ trivial,  equals the pushforward
$\pi_* [\dbar_M + \dbar_M^*]$. We then prove Theorem \ref{1.5}.

\begin{proposition} \label{sclass}
  Let $\pi \: : \: M \rightarrow X$ be a resolution of singularities, with
  $\pi^{-1}(X_{sing})$ being a simple normal crossing divisor.
  Then $[\dbar_s + \dbar_s^*] = \pi_* [\dbar_M + \dbar_M^*]$.
\end{proposition}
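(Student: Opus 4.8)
The plan is to compare the two unbounded $KK$-cycles by interpolating through a one-parameter family of closed extensions of $\dbar$ on $X_{reg}$, and to use the resolution $\pi$ to identify the ``large'' extension with something pushed forward from $M$. Concretely, pick a defining function for the exceptional divisor near $X_{sing}$ and, on a neighborhood of $\pi^{-1}(X_{sing})$, conjugate $\dbar_M + \dbar_M^*$ by a weight $r^{t}$ (equivalently, change the Hermitian metric on the fibers of $\Lambda^{0,*}$ by a power of the distance to the divisor). At $t=0$ one has the smooth operator $\dbar_M + \dbar_M^*$ on $M$; as $t$ increases past appropriate thresholds the $L^2$-cohomology jumps, and for a suitable range of $t$ the weighted $L^2$ $\dbar$-complex on $M$, pushed down by $\pi$, is unitarily equivalent (after removing the discrepancy between $g$ and the metric pulled back from $M$, which is harmless by the relative-boundedness argument already used at the end of the proof of Proposition~\ref{2.1}) to the minimal extension $\dbar_s$ on $X_{reg}$. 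This is exactly the kind of identification made in \cite{Pardon-Stern (1991)} and invoked in (\ref{4.15}); I would cite it to get a unitary equivalence of the relevant Hilbert complexes, hence of the operators $\dbar_s + \dbar_s^*$ and the weighted operator on $M$.

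Next I would show that the weight deformation does not change the $KK$-class. For each $t$ the conjugated operator on $M$ has compact resolvent (it differs from $\dbar_M + \dbar_M^*$ by a zeroth-order term that is bounded off the divisor and controlled near it; near the divisor one has the explicit model computation on a punctured polydisc), and the algebra $\mathcal A$ of functions with bounded commutator is unchanged because conjugation by $r^t$ commutes with multiplication by functions pulled back from $X$ and changes $[\,\cdot\,,f]$ only by a bounded operator. The family $t \mapsto (\text{conjugated }\dbar_M+\dbar_M^*)$ is a norm-resolvent-continuous path of unbounded Fredholm modules over $C(X)$ — here ``over $C(X)$'' means acting through $\pi^*$ — so it defines a constant class in $\KK_0(X)$. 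At $t=0$ this class is $\pi_*[\dbar_M + \dbar_M^*]$ by the very definition of the pushforward along the proper map $\pi$ (the module for $\dbar_M + \dbar_M^*$ over $C(M)$, restricted along $\pi^* : C(X) \to C(M)$, represents $\pi_*[\dbar_M + \dbar_M^*]$). At the other end of the range the class is $[\dbar_s + \dbar_s^*]$ by the unitary equivalence of the previous paragraph, using Proposition~\ref{2.1} to know the latter is a legitimate cycle.

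The main obstacle will be the analysis near the divisor: verifying that the weighted operator on $M$ stays self-adjoint with compact resolvent for all $t$ in the interpolating interval, and — the genuinely delicate point — that the Hilbert complex of the correct weighted $L^2$ $\dbar$-complex on $M$ really is the minimal extension downstairs, rather than some intermediate extension. This is where the simple-normal-crossing hypothesis on $\pi^{-1}(X_{sing})$ is used: it makes the model near the divisor a product of punctured discs, so the weighted Dolbeault cohomology can be computed by a Künneth/Mayer–Vietoris argument and matched against the local description of $\underline{\Dom}(\dbar_s^{0,\star})$ and $\underline{\HH}^{0,\star}(\dbar_s)$ from \cite[Corollary 1.2]{Ruppenthal (2018)}. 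I would handle this by reducing to the one-variable punctured-disc computation, where the weighted $\dbar$ on $L^2(r^{2t}\,dA)$ is completely explicit, and then assembling. A secondary technical point is checking norm-resolvent continuity in $t$ uniformly enough to invoke homotopy invariance of $\KK$; this should follow from the fact that the difference of resolvents is an integral over $t$ of a bounded operator sandwiched between two resolvents, each compact.
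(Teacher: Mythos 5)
There is a genuine gap, and it sits at the center of your argument. You need the weighted $L^2$ $\dbar$-complex on $M$ (for $t$ in the right range) to be \emph{unitarily equivalent as a Hilbert complex} to the minimal extension $\dbar_s$ downstairs, but \cite{Pardon-Stern (1991)} supplies only an isomorphism of cohomology groups $\HH^i(\dbar_s) \cong \HH^{0,i}(M)$, not an identification of domains or of the operators $\dbar_s + \dbar_s^*$ and a weighted $\dbar_M + \dbar_M^*$. The relative-boundedness argument at the end of the proof of Proposition \ref{2.1} does not bridge this: it compares two fiber metrics that are mutually bounded, whereas the pulled-back metric $\pi^* g$ on $M - \pi^{-1}(X_{sing})$ and a smooth metric on $M$ are not quasi-isometric (the former degenerates along the divisor), so the two $L^2$-inner products and the two formal adjoints differ in an essentially unbounded way. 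Without this identification the far endpoint of your homotopy is simply not known to represent $[\dbar_s + \dbar_s^*]$. The homotopy itself is also unjustified: conjugating $\dbar_M$ by $r^t$ yields $\dbar_M + t\,(\dbar r/r)\wedge$, and $\dbar r/r$ blows up like $1/r$ at the divisor, so this is a singular unbounded perturbation rather than a ``bounded zeroth-order term''; self-adjointness, compactness of the resolvent, and norm-resolvent continuity each need proof. Moreover, you concede that the $L^2$-cohomology jumps at thresholds of $t$; norm-resolvent continuity would force the graded index to be constant across those thresholds, and establishing that constancy is essentially the Pardon--Stern input you are hoping to quote, so the argument risks circularity.

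The paper's proof avoids all of this analysis by working in the K-homology exact sequence of the pair $(X, X_{sing})$. Choosing the Hermitian metric on $M$ so that $\pi$ is an isometry outside a regular neighborhood of the exceptional divisor, the two cycles literally agree over the complement, which shows their images in $\KK_0(X, X_{sing})$ coincide. The difference then lies in the image of $\KK_0(X_{sing})$, and since $X_{sing}$ is finite (and $X$ may be taken connected) that image is faithfully detected by the index map $b_* : \KK_0(X) \to \KK_0(\pt) \cong \Z$. The only analytic input is thus the equality of the two indices, which is exactly the cohomology-level statement of \cite{Pardon-Stern (1991)}; no unitary equivalence of complexes is ever required. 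To salvage your route you would have to prove the domain identification between the weighted complex on $M$ and $\dbar_s$ directly, which is a substantial piece of analysis in its own right; otherwise the exact-sequence argument is the efficient path.
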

\begin{proof}
  The method of proof comes from \cite{Haskell (1987)}. Consider the
  following part 
of the K-homology exact sequence for the pair $(X, X_{sing})$:
\begin{equation}
  \KK_0(X_{sing}) \stackrel{\alpha}{\rightarrow}
  \KK_0(X) \stackrel{\beta}{\rightarrow}
  \KK_0(X, X_{sing}).
\end{equation}
\begin{lemma} \label{newlem}
  We have $\beta(  [\dbar_s + \dbar_s^*]) =
  \beta(\pi_* [\dbar_M + \dbar_M^*])$ in $\KK_0(X, X_{sing})$.
\end{lemma}
\begin{proof}
  Put $D = \pi^{-1}(X_{sing}) \subset M$. Since it has simple
  normal crossings, there will be a small regular neighborhood 
  of $D$ whose closure $C^\prime$ is
  homotopy equivalent to $D$. We can also assume that $C = \pi(C^\prime)$ is
  homotopy equivalent to $X_{sing}$ \cite[Theorem 2.10]{Milnor (1968)}.
  As $[\dbar_M + \dbar_M^*]$ is independent of the choice of Hermitian metric
  on $M$, we can choose a Hermitian metric on $M$ so that
$\pi$ restricts to an isometry from $M - C^\prime$ to $X-C$.

  Consider the commutative diagram
  \begin{equation}
\begin{array}{ccccccc}
  \KK_0(M) & \rightarrow & \KK_0(M, D) & \cong & \KK_0(M, C^\prime) & \cong & \KKK(C_0(M-C^\prime); \C) \\
  \pi_* \downarrow &  & \downarrow & & \downarrow & & \downarrow \\
  \KK_0(X) & \stackrel{\beta}{\rightarrow} & \KK_0(X, X_{sing}) & \cong &
  \KK_0(X, C) & \cong & \KKK(C_0(X-C); \C).
\end{array}
\end{equation}
  Starting with $[\dbar_M + \dbar_M^*] \in \KK_0(M)$ and going along the
  top row, its image in $\KKK(C_0(M-C^\prime); \C)$
  is the restriction of the analytic K-homology class, i.e. one
  only acts by functions that vanish on $C^\prime$.
  The right vertical arrow of the diagram is an isomorphism coming from the
  bijection between $M-C^\prime$ and $X-C$. By the commutativity of the
  diagram, we now know what
  $\beta(\pi_* [\dbar_M + \dbar_M^*])$ is as an element of
$\KKK(C_0(X-C); \C)$.
However, this is isomorphic to the
  restriction of $[\dbar_s + \dbar_s^*] \in \KK_0(X)$ to an element of
  $\KKK(C_0(X-C); \C)$ (since $\pi$ gives an isometry between
  $M-C^\prime$ and $X-C$). The latter
  restriction is the same as $\beta([\dbar_s + \dbar_s^*])$. 
This proves the lemma.
\end{proof}
To continue with the proof of Proposition \ref{sclass},
we know now that $[\dbar_s + \dbar_s^*]) -
\pi_* [\dbar_M + \dbar_M^*]$ lies in the kernel of $\beta$, and
so lies in the image of $\alpha$. For the purpose of the proof, we can
assume that $X$ is connected. Let
$a : \pt \rightarrow X$ be an arbitrary fixed embedding and let
$a_* \: : \: \KK_0(\pt) \rightarrow \KK_0(X)$ be the induced homomorphism.
The connectedness of $X$ implies that
$\Image(\alpha) = \Image(a_*)$. Let $b : X \rightarrow \pt$ be the unique
point map. Consider
$\pt \stackrel{a}{\rightarrow} X \stackrel{b}{\rightarrow} \pt$ and the
induced homomorphisms
$\KK_0(\pt) \stackrel{a_*}{\rightarrow} \KK_0(X) \stackrel{b_*}{\rightarrow}
\KK_0(\pt)$.  Then the map $b_*$ restricts to an isomorphism between
$\Image(a_*)$ and $\KK_0(\pt)$. Hence to prove the proposition, it suffices
to show that $b_* [\dbar_s + \dbar_s^*] = 
b_*(\pi_* [\dbar_M + \dbar_M^*])$ in $\KK_0(\pt) \cong \Z$.

Now $b_* [\dbar_s + \dbar_s^*]$ is the index of 
$\dbar_s + \dbar_s^*$, i.e.
$\sum_{i=0}^n (-1)^i \dim(\HH^i(\dbar_s))$, while
  $b_*(\pi_* [\dbar_M + \dbar_M^*])$ is the index of $\dbar_M + \dbar_M^*$,
  i.e.
  $\sum_{i=0}^n (-1)^i \dim(\HH^i(\dbar_M))$. From
    \cite{Pardon-Stern (1991)}, these are equal term-by-term.
    This proves the proposition.
  \end{proof}

We now prove Theorem \ref{1.5}. Suppose that $X$ is a connected projective
algebraic variety.
In terms of the resolution $\pi \: : \: M \rightarrow X$,
it was pointed out in
\cite[p. 104]{Baum-Fulton-MacPherson (1975)} that there is an identity in
$\KK_0(X)$: 
\begin{equation} \label{BFM}
[{\mathcal O}_X]_{BFM} - \pi_* [{\mathcal O}_M]_{BFM} = \sum_j n_j
[{\mathcal O}_{V_j}]_{BFM}.
\end{equation}
Here 
the $n_j$'s are certain integers and the
       $V_j$'s are irreducible subvarieties of
the singular locus of $X$. In our case of isolated singularities,
the $V_j$'s are just the points $x_j$ in $X_{sing}$.
As $[{\mathcal O}_M]_{BFM} = [\dbar_M + \dbar_M^*]$, 
  Proposition \ref{sclass} implies that
\begin{equation} \label{BFM2}
[{\mathcal O}_X]_{BFM} = [\dbar_s + \dbar_s^*] +  \sum_j n_j
[{\mathcal O}_{V_j}]_{BFM}.
\end{equation}  

  Let $(T_1, d_1)$ denote the complex $(T_V, d_V)$
  when the vector bundle $V$ is the trivial bundle.
  Let $[{\mathcal O}_X]_{an} \in \KK_0(X)$ be the K-homology class
  coming from the operator $d_1 + d_1^*$.
  We can deform the chain complex
  $(T_1, d_1)$ to make the differential 
  equal to $\dbar_{s} \oplus 0$ without changing the K-homology class
  arising from the complex.
  Then (\ref{BFM2}) implies that
  $[{\mathcal O}_X]_{an}$ and $[{\mathcal O}_X]_{BFM}$ have the same
  image in $\KK_0(X, X_{sing})$; c.f. the proof of Lemma \ref{newlem}.
  Let $b : X \rightarrow \pt$ be the unique point map.
 As in
 the proof of Proposition \ref{sclass}, 
 to conclude that $[{\mathcal O}_X]_{an} =
 [{\mathcal O}_X]_{BFM}$ in $\KK_0(X)$, it now suffices to show that
 $b_* [{\mathcal O}_X]_{an} =
 b_* [{\mathcal O}_X]_{BFM}$ in $\KK_0(\pt) \cong \Z$. Now
 $b_* [{\mathcal O}_X]_{an}$ is the index of $d_1 + d_1^*$ which, from
 Theorem \ref{4.11}, equals the arithmetic genus
 $\sum_{i=0}^n (-1)^i \dim(\HH^{i}(X; {\mathcal O}_X))$. On the other hand,
 from \cite[Section 3]{Baum-Fulton-MacPherson (1979)}, we also have
 $b_* [{\mathcal O}_X]_{BFM} = \sum_{i=0}^n (-1)^i \dim(\HH^{i}(X; {\mathcal O}_X))$. This proves the theorem.

\begin{remark}
  We mention some of the issues involved in extending the present paper to
  nonisolated singularities.  First, it seems to be open whether
  $\dbar_s + \dbar_s^*$ has compact resolvient, so the unbounded $\KKK$-formalism
  may not be applicable.  However, it is known that the unreduced cohomology of the $\dbar_s$-complex is finite dimensional, being isomorphic to the cohomology of a resolution \cite{Pardon-Stern (1991)}. Hence the $\dbar_s$-complex is
  Fredholm and one could use the bounded $\KKK$-description of K-homology,
  although it would be more cumbersome.
  
  We expect that Proposition \ref{sclass} still holds
  if $X$ has nonisolated
  singularities. It is known that
taking resolutions $\pi : M \rightarrow X$, the pushforward
$\pi_* [\overline{\partial}_M + \overline{\partial}_M^*] \in \KK_0(X)$
is independent of the
choice of resolution \cite{Hilsum (2018)}.

  One could ask for an extension of Theorem \ref{4.11} to the case of
  nonisolated singularities.  As an indication, one would expect that
  taking products of complex spaces would lead to tensor products of
  the cochain complexes.  In particular, suppose that $Z$ is a smooth
  Hermitian manifold and $X$ has isolated singular points.  Then the
  cochain complex for $Z \times X$ would have contributions from
  differential forms along the singular locus.

  In a related vein, in principle one can apply (\ref{BFM})
       inductively to get an expression for $[{\mathcal O}_X]_{BFM}$. 
\end{remark}

\end{document}